\tikzstyle{decision} = [diamond, draw, fill=blue!20, 
\tikzstyle{block} = [rectangle, draw, fill=blue!20, 
\tikzstyle{line} = [draw, -latex']
\tikzstyle{cloud} = [draw, ellipse,fill=red!20, node distance=3cm,
\tikzset{main node/.style={circle,fill=blue!20,draw,minimum size=1cm,inner sep=0pt},  }
\begin{document}
\title[Natural gradient via optimal transport]{Natural gradient via optimal transport}
\author[Li]{Wuchen Li}
\address{Department of Mathematics, University of California, Los Angeles, USA.}
\email{wcli@math.ucla.edu}
\author[Mont\'ufar]{Guido Mont\'ufar}
\address{Department of Mathematics and Department of Statistics, University of California, Los Angeles, USA; Max Planck Institute for Mathematics in the Sciences, Leipzig, Germany.}
\email{montufar@math.ucla.edu}

\newcommand{\vr}{\overrightarrow}
\newcommand{\wt}{\widetilde}
\newcommand{\dd}{\mathcal{\dagger}}
\newcommand{\ts}{\mathsf{T}}

\keywords{Optimal transport; Information geometry; Wasserstein statistical manifold; Displacement convexity; Machine learning.}

\maketitle

\begin{abstract}
We study a natural Wasserstein gradient flow on manifolds of probability distributions with discrete sample spaces. 
We derive the Riemannian structure for the probability simplex from the dynamical formulation of the Wasserstein distance on a weighted graph. 
We pull back the geometric structure to the parameter space of any given probability model, which allows us to define a natural gradient flow there. 
In contrast to the natural Fisher-Rao gradient, the natural Wasserstein gradient incorporates a ground metric on sample space. We illustrate the analysis of elementary exponential family examples and demonstrate an application of the Wasserstein natural gradient to maximum likelihood estimation. 
\end{abstract}

\section{Introduction}
The statistical distance between histograms plays a fundamental role in statistics and machine learning. 
It provides the {geometric} structure on statistical manifolds~\cite{IG}. 
Learning problems usually {correspond to} minimizing a loss function over these manifolds. 
An important example is the Fisher-Rao metric on the probability simplex, which has been studied especially within the field of information geometry~\cite{IG,IG2}. 
A classic result due to Chentsov~\cite{cencov1982} characterizes this Riemannian metric as the only one, up to scaling, that is invariant with respect to natural statistical embeddings by Markov morphisms (see also~\cite{Campbell,Lebanon05,e16063207}). 
Using the Fisher-Rao metric, a natural Riemannian gradient descent method is introduced~\cite{NG}. 
This natural gradient has found numerous successful applications in machine learning (see, e.g.,~\cite{NIPS1996_1248,10.1007/11564096_29,Yi:2009:SSU:1553374.1553522,Malago:2011:TGE:1967654.1967675,Pascanu+Bengio-ICLR2014}). 

Optimal transport provides {another} statistical distance, named Wasserstein {or} Earth Mover's distance. In recent years, this metric has attracted increasing attention within {the machine learning} community~\cite{WGAN, LWL, Boltzman}. 
One distinct feature of optimal transport is that it provides a distance among histograms that {incorporates} {a ground metric} on sample space. 
The $L^2$-Wasserstein distance has a dynamical formulation, which exhibits a metric tensor structure.  
The {set of probability densities} with this metric forms an infinite-dimensional Riemannian manifold, named density manifold~\cite{Lafferty}. 
The gradient descent method in the density manifold, called Wasserstein gradient flow, has been widely studied in the literature; see~\cite{otto2001, vil2008} and references. 

A question intersecting optimal transport and information geometry arises: What is the natural Wasserstein gradient descent method on the parameter space of a statistical model? In optimal transport, the Wasserstein gradient flow is studied on the full space of probability densities, and shown to have deep connections with the ground metrics on sample space deriving from physics \cite{qf}, fluid mechanics \cite{Gangbo2} and differential geometry \cite{Lott}. 
We expect that these relations also exist on parametrized probability models, and that the Wasserstein gradient flow can be useful in the optimization of objective functions that arise in machine learning problems. 
By incorporating a ground metric on sample space, this method can serve to implement useful priors in the learning algorithms. 

We are interested in developing synergies between the information geometry and optimal transport communities. 
In this paper, we take a natural first step in this direction. 
We introduce the Wasserstein natural gradient flow on the parameter space of probability models with  discrete sample spaces. 
The $L^2$-Wasserstein metric on discrete states {was introduced} in~\cite{chow2012, Maas, M}. Following the settings from~\cite{Li2, Li1, Li3, LiG}, the probability simplex forms the Riemannian manifold called Wasserstein probability manifold. The Wasserstein metric on the probability simplex can be pulled back to the parameter space of a probability model. 
This metric allows us to define a natural Wasserstein gradient method on parameter space. 

We note  that one finds several formulations of optimal transport for continuous sample spaces. On the one hand, there is the static formulation, known as Kantorovich's linear programming~\cite{vil2008}. 
Here, the linear program is to find the minimal value of a functional over the set of joint measures with given marginal histograms. 
The objective functional is given as the expectation value of the ground metric with respect to a joint probability density measure. 
On the other hand, there is the dynamical formulation, known as the Benamou-Brenier formula~\cite{Benamou2000}. 
This dynamic formulation gives the metric tensor for measures by lifting the ground metric tensor of sample spaces. Both static and dynamic formulations are equivalent in the case of continuous state spaces. However, the two formulations lead to different metrics in the simplex of discrete probability distributions. The major reason for this difference is that the discrete sample space is not a length space.\footnote{A length space is one in which the distance between points can be measured as the infimum length of continuous curves between them.} Thus the equivalence result in classical optimal transport is no longer true in the setting of discrete sample spaces. We note that for the static formulation, there is no Riemannian metric tensor for the discrete probability simplex. See~\cite{Li1, Maas} for a detailed discussion. 

In the literature, the exploration of connections between optimal transport and information geometry was initiated in~\cite{LP, IGWD, Wong}. 
These works focus on the distance function induced by linear programming on discrete sample spaces. 
As we pointed out above, this approach can not cover the Riemannian and differential structures induced by optimal transport.  
In this paper, we use the dynamical formulation of optimal transport to define a Riemannian metric structure for general statistical manifolds. 
With this, we obtain a natural gradient operator, which can be applied to any optimization problem over a parameterized statistical model. In particular, it is applicable to maximum likelihood estimation. Other works have studied the Gaussian family of distributions with $L^2$-Wasserstein metric \cite{IGW, GW}. 
In that particular case, the constrained optimal transport metric tensor can be written explicitly and the corresponding density submanifold is a totally geodesic submanifold. In contrast to those works, our discussion is applicable to arbitrary parametric models. 

This paper is organized as follows. 
In Section~\ref{sec2} we briefly review the Riemannian manifold structure in probability space introduced by optimal transport in the cases of continuous and discrete sample spaces. 
In Section~\ref{sec3} we introduce Wasserstein statistical manifolds by isometric embedding into the probability manifold, and in Section~\ref{secnew} we derive the corresponding gradient flows. 
In Section~\ref{sec4} we discuss a few examples. 

\section{Optimal transport on continuous and discrete sample spaces} 
\label{sec2}

In this section, we briefly review the results of optimal transport. We introduce the corresponding Riemannian structure for simplices of probability distributions with discrete support. 

\subsection{Optimal transport on continuous sample space}

 We start with a review of the optimal transport problem on continuous spaces. This will guide our discussion of the discrete state case.  For related studies, we refer the reader to~\cite{Lafferty, vil2008} and the many references therein. 

Denote the sample space by $(\Omega, g^\Omega)$. Here $\Omega$ is a finite dimensional smooth Riemannian manifold, for example, $\mathbb{R}^d$ or the open unit ball therein. 
Its inner product is denoted by $g^\Omega$ and its volume form by $dx$. Denote the geodesic distance of $\Omega$ by $d_\Omega\colon \Omega\times \Omega\rightarrow \mathbb{R}_+$.  

Consider the set $\mathcal{P}_2(\Omega)$ of {Borel measurable probability density functions} on $\Omega$ with finite second moment. 
Given {$\rho^0, \rho^1\in \mathcal{P}_2(\Omega)$, the $L^2$-Wasserstein distance {between $\rho^0$ and $\rho^1$ is denoted by $W\colon \mathcal{P}(\Omega)\times \mathcal{P}(\Omega)\rightarrow \mathbb{R}_+$. There are two equivalent ways of defining this distance. On one hand, there is the static formulation. This refers to the following linear programming problem: 
\begin{equation}
\label{eq:static}
W(\rho^0, \rho^1)^2=\inf_{\pi\in \Pi(\rho^0, \rho^1)}\int_{\Omega\times \Omega}d_\Omega(x,y)^2\pi(dx,dy),
\end{equation}
where the infimum is taken over the set $\Pi(\rho^0, \rho^1)$ {of joint probability measures} {on} $\Omega\times \Omega$ that have marginals $\rho^0$, $\rho^1$. 

On the other hand, the Wasserstein distance $W$ can be written in a dynamic formulation, where a probability path $\rho:[0,1]\rightarrow \mathcal{P}_2(\Omega)$ connecting $\rho^0$, $\rho^1$ is considered. This refers to a variational problem known as the Benamou-Brenier formula: 
\begin{subequations}\label{BB1}
\begin{equation}\label{BB}
W(\rho^0, \rho^1)^2=\inf_{\Phi}~\int_0^1\int_{\Omega} g^\Omega_x(\nabla\Phi(t,x), \nabla\Phi(t,x))\rho(t,x) dx dt, 
\end{equation}
where the infimum is taken over the set of Borel \emph{potential} functions {$[0,1]\times \Omega \to \mathbb{R}$.} 
Each potential function $\Phi$ determines a corresponding density path $\rho$ as the solution of the  \emph{continuity equation}  
\begin{equation}\label{BB2}
\frac{\partial \rho(t,x)}{\partial t}+\textrm{div} (\rho(t,x)\nabla\Phi(t,x))=0,\quad \rho(0,x)=\rho^0(x),\quad \rho(1,x)=\rho^1(x).
\end{equation}
\end{subequations}
Here $\textrm{div}$ and $\nabla$ are the divergence and gradient operators in $\Omega$. 
The continuity equation is well known in physics. 

The equivalence of the static~\eqref{eq:static} and dynamic~\eqref{BB1} formulations is well known (for continuous $\Omega$). 
For the reader's convenience we give a sketch of proof in the appendix. 
{In this paper we focus on the variational formulation~\eqref{BB1}.} 
In fact, this formulation {entails the definition of a Riemannian structure as we now discuss.} 
For simplicity, we only consider {the set of smooth and strictly positive probability densities}
\begin{equation*}
\mathcal{P}_+(\Omega)=\Big\{\rho \in C^{\infty}(\Omega)\colon \rho(x)>0,~\int_{\Omega}\rho(x)dx=1\Big\} \subset \mathcal{P}_2(\Omega).
\end{equation*}
Denote $ \mathcal{F}(\Omega):=C^{\infty}(\Omega)$ the set of smooth real valued functions on $\Omega$. 
The tangent space of $\mathcal{P}_+(\Omega)$ is given by 
$$
T_\rho\mathcal{P}_+(\Omega) = \Big\{\sigma\in  \mathcal{F}(\Omega)\colon \int_{\Omega}\sigma(x) dx=0 \Big\}.
$$
Given $\Phi\in  \mathcal{F}(\Omega)$ and $\rho\in \mathcal{P}_+(\Omega)$, define
\begin{equation*}
V_{\Phi}(x):=-\textrm{div} (\rho(x) \nabla \Phi(x)). 
\end{equation*}
We assume the zero flux condition 
\begin{equation*}
\int_{\Omega}V_\Phi(x)dx 
=0. 
\end{equation*}
In view of the continuity equation, the zero flux condition is equivalent to requiring that $\int_\Omega \frac{\partial \rho}{\partial t}dx = 0$, which means that the space integral of $\rho$ is always $1$. 
When $\Omega$ is compact without boundary, this is automatically satisfied. This is also true when $\Omega = \mathbb{R}^d $ and $\rho$ has finite second moment. 
Thus $V_\Phi\in T_{\rho}\mathcal{P}_+(\Omega)$. 
The elliptic operator $\nabla\cdot(\rho\nabla)$ identifies the function $\Phi$ on $\Omega$ modulo additive constants with a tangent vector $V_{\Phi}$ of the space of densities (for more details see~\cite{Lafferty, Lott}). 
This gives an isomorphism 
$$
 \mathcal{F}(\Omega)/\mathbb{R}\rightarrow T_{\rho}\mathcal{P}_+(\Omega); \quad \Phi \mapsto V_\Phi .
$$ 
Define the Riemannian metric (inner product) on the tangent space of positive densities $g^W\colon {T_\rho}\mathcal{P}_+(\Omega)\times {T_\rho}\mathcal{P}_+(\Omega)\rightarrow \mathbb{R}$ by
\begin{equation*}
g^W_\rho(V_{\Phi}, V_{\tilde{\Phi}})=\int_{\Omega}g^\Omega_x(\nabla\Phi(x), \nabla\tilde{\Phi}(x))\rho(x) dx,
\end{equation*}
where $\Phi(x)$, $\tilde{\Phi}(x)\in  \mathcal{F}(\Omega)/\mathbb{R}$. 
This inner product endows $\mathcal{P}_+(\Omega)$ with an infinite dimensional Riemannian metric tensor. 
In other words, the variational problem~\eqref{BB1} is a geometric action energy in $(\mathcal{P}_+(\Omega), g^W)$ in the sense of~\cite{Benamou2000, Lott}.  
In literature~\cite{Lafferty}, $(\mathcal{P}_+(\Omega), g^W)$ is called density manifold. 

\subsection{Dynamical optimal transport on discrete sample spaces}

We translate the dynamical perspective from the previous section to discrete state spaces, i.e., we replace the continuous space $\Omega$ by a discrete space $I=\{1,\ldots, n\}$. 

To encode the metric tensor of discrete states, we first need to introduce a ground metric notion on sample space. We do this in terms of a graph with weighted edges, $G=(V, E, \omega)$, where
$V=I$ is the vertex set, $E$ is the edge set, and $\omega=(\omega_{ij})_{i,j\in I}\in \mathbb{R}^{n\times n}$ are the edge weights. 
These weights satisfy 
$$
\omega_{ij}=
\begin{cases}
\omega_{ji}>0, & \textrm{if $(i,j)\in E$}\\
0, & \textrm{otherwise}
\end{cases}.
$$ 
As mentioned above, the weights encode the ground metric on the discrete state space. 
More precisely, we write 
\begin{equation}\label{GM}
\omega_{ij}=\frac{1}{(d^G_{ij})^2},\quad\textrm{if $(i,j)\in E$},
\end{equation}
where $d^G_{ij}$ represents the distance or \emph{ground metric} between states $i$ and $j$. 
The set of neighbors or adjacent vertices of $i$ is denoted by $N(i)=\{j\in V\colon (i,j)\in E\}$. 

The probability simplex supported on the vertices of $G$ is defined by 
\begin{equation*}
\mathcal{P}(I) = \Big\{(p_1,\ldots, p_n)\in \mathbb{R}^n \colon \sum_{i=i}^n p_i=1,\quad  p_i\geq 0\Big\}. 
\end{equation*}
Here $p=(p_1,\ldots, p_n)$ is a probability vector with coordinates $p_i$ corresponding to the probabilities assigned to each node $i\in I$. 
We denote the relative interior of the probability simplex by $\mathcal{P}_+(I)$. This consists of the strictly positive probability distributions, $p\in\mathcal{P}(I)$ with $p_i>0$, $i\in I$. 

Next we introduce the variational problem~\eqref{BB1} on discrete states. 
First we need to define the ``metric tensor'' on graphs. 
A {\em vector field} $v=(v_{ij})_{i,j\in V}\in \mathbb{R}^{n\times n}$ on $G$ is a  
{skew-symmetric matrix}:  
\begin{equation*}
v_{ij}=\begin{cases}-v_{ji}, & \textrm{if $(i,j)\in E$}\\
0, & \textrm{otherwise}
\end{cases}. 
\end{equation*} 
A potential function $\Phi=(\Phi_i)_{i=1}^n\in\mathbb{R}^{n}$ defines a {\em gradient vector field} $\nabla_G\Phi=(\nabla_G\Phi_{ij})_{i,j\in V}\in \mathbb{R}^{n\times n}$ on the graph $G$ by the finite differences 
\begin{equation*}
\nabla_G\Phi_{ij}=\begin{cases}\sqrt{\omega_{ij}}(\Phi_i-\Phi_j) & \textrm{if $(i,j)\in E$}\\
0 & \textrm{otherwise}
\end{cases}.
\end{equation*}
Here we use $\sqrt \omega$ rather than $1/d^G$ for simplicity of notations. In this way, we can represent the gradient, divergence, and Laplacian matrix in a multiplicity of weight, instead of dividing the ground metric. 

We define an inner product of vector fields $v_{ij}$, $\tilde{v}_{ij}$ at each state $i \in I$ by 
\begin{equation*}
g^I_i(v, \tilde v) := 
\frac12 \sum_{j\in N(i)}v_{ij}\tilde{v}_{ij}.
\end{equation*}
In particular, the gradient vector field $\nabla_G\Phi $ defines a kinetic energy at each state $i \in I$ by  
$$
g^I_i(\nabla_G\Phi, \nabla_G\Phi) := 
\frac12 \sum_{j\in N(i)} (\Phi_i -\Phi_j)^2 \omega_{ij}. 
$$

We next define the expectation value of kinetic energy with respect to a probability distribution $p$: 
\begin{equation*}
\begin{split}
(\nabla_G\Phi,\nabla_G \Phi)_p
: =& 
\sum_{i\in I} p_i\;
g^I_i(\nabla_G\Phi, \nabla_G\Phi)=\frac{1}{2}\sum_{(i,j)\in E}\omega_{ij}(\Phi_i-\Phi_j)^2\frac{p_i+p_j}{2}. 
 \end{split}
 \end{equation*}
This can also be written as 
\begin{equation*}
(\nabla_G\Phi,\nabla_G \Phi)_p=\sum_{i=1}^n\Phi_i\sum_{j\in N(i)}{\omega_{ij}}(\Phi_i-\Phi_j)\frac{p_i+p_j}{2}=\Phi^{\ts}\big(-\textrm{div}_G(p\nabla_G\Phi)\big),
\end{equation*}
where
\begin{equation}\label{div}
-\textrm{div}_G(p \nabla_G\Phi):=
\Bigl(\sum_{j\in N(i)}\omega_{ij}(\Phi_i-\Phi_j)\frac{p_i+p_j}{2}\Bigr)_{i\in I}.
\end{equation}
There are two definitions hidden in~\eqref{div}. 
First, 
$\operatorname{div}_G\colon \mathbb{R}^{n\times n}\rightarrow\mathbb{R}^n$ maps any given vector field $m$ on the graph $G$ to 
a potential function 
\begin{equation*}
\textrm{div}_G(m)=\big(\sum_{j\in N(i)}\sqrt{\omega_{ij}}m_{ji}\big)_{i\in I}.
\end{equation*}
Second, the probability weighted  gradient vector field $m=p\nabla_G\Phi$ defined by
\begin{equation*}
m_{ij}=
\begin{cases}
\frac{p_i+p_j}{2}(\Phi_i-\Phi_j)\sqrt{\omega_{ij}}, & \textrm{if $(i,j)\in E$}\\
0, & \textrm{otherwise}
\end{cases},
\end{equation*}
where $\frac{p_i+p_j}{2}$ represents the probability weight on the edge $(i,j)\in E$. 

We are now ready to introduce the $L^2$-Wasserstein metric on $\mathcal{P}_+(I)$. 
\begin{definition}\label{def_metric}
For any $p^0$, $p^1\in \mathcal{P}_+(I)$, define the Wasserstein distance $W\colon\mathcal{P}_+(I)\times\mathcal{P}_+(I)\rightarrow\mathbb{R}$ by 
\begin{equation*}
W(p^0,p^1)^2:= \inf_{p(t), \Phi(t)}~\Big\{\int_0^1(\nabla_G \Phi(t), \nabla_G \Phi(t))_{p(t)} dt\Big\}.
\end{equation*}
Here the infimum is taken over pairs $(p(t), \Phi(t))$ with 
$p\in H^1((0,1), \mathbb{R}^{n})$ and $\Phi\colon
[0, 1]\rightarrow\mathbb{R}^n$ measurable, satisfying 
\begin{equation*}
\dot p(t)+\operatorname{div}_G(p(t) \nabla_G\Phi(t))=0,~ p(0)=p^0,~p(1)=p^1. 
\end{equation*}
\end{definition}

\begin{remark}
It is worth mentioning that the metric given in Definition~\ref{def_metric} is {different} from the metric defined by linear programming. 
In other words, denote the distance $d^G(i,j)$ between
two vertices $i$ and $j$ as the length of a shortest $(i,j)$-path. 
If $(i,j)\in E$, then $d^G(i,j)$ is same as the ground metric defined in \eqref{GM}. 
Then
 \begin{equation}\label{linear}
\big(W(p^0, p^1)\big)^2
\not\equiv \min_{\pi} \Big\{ \sum_{1\leq i,j\leq n} d_G(i,j)^2 \pi_{ij} ~:~\sum_{i=1}^n\pi_{ij}=p^0_j ,\quad \sum_{j=1}^n\pi_{ij}=p^1_i,\quad \pi_{ij}\geq 0 \Big\}.
\end{equation}
The reason for this in-equivalence is that the discrete sample space $I$ is not a length space. 
In other words, there is no continuous path in $I$ connecting two nodes in $I$. 
For more details see discussions in the appendix.
\end{remark}
\subsection{Wasserstein geometry and discrete probability simplex}

In this section we introduce the primal coordinates of the discrete probability simplex with $L^2$-Wasserstein Riemannian metric. 
Our discussion follows the recent work~\cite{LiG}. 
The probability simplex $\mathcal{P}(I)$ is a manifold with boundary. 
To simplify the discussion, we focus on the interior $\mathcal{P}_+(I)$. 
The geodesic properties on the boundary $\partial\mathcal{P}(I)$ have been studied in~\cite{Li3}. 

Let us focus on the Riemannian structure. 
In the following we introduce an inner product on the tangent space 
\begin{equation*}
T_p\mathcal{P}_+(I) = \Big\{(\sigma_i)_{i=1}^n\in \mathbb{R}^n\colon  \sum_{i=1}^n\sigma_i=0 \Big\}.
\end{equation*}
Denote the space of potential functions on $I$ by $ \mathcal{F}(I)=\mathbb{R}^{n}$. 
Consider the quotient space 
\begin{equation*}
 \mathcal{F}(I)/ \mathbb{R}=\{[\Phi] \colon (\Phi_i)_{i=1}^n\in \mathbb{R}^n\},
\end{equation*}
where $[\Phi]=\{(\Phi_1+c,\ldots, \Phi_n+c) \colon c\in\mathbb{R}\}$ are functions defined up to addition of constants. 

We introduce an identification map via~\eqref{div}
\begin{equation*}
V\colon \mathcal{F}(I)/\mathbb{R} \rightarrow T_p\mathcal{P}_+(I),\quad\quad 
V_\Phi=-\textrm{div}_G(p\nabla_G\Phi).
\end{equation*} 

In~\cite{chow2012} it is shown that $V_\Phi\colon  \mathcal{F}(I)/\mathbb{R}\rightarrow T_p\mathcal{P}_+(I)$ is a well defined map which is linear and  one-to-one. 
I.e., $ \mathcal{F}(I)/\mathbb{R}\cong T_p^*\mathcal{P}_+(I)$, where $T_p^*\mathcal{P}_+(I)$ is the cotangent space of $\mathcal{P}_+(I)$.
This identification induces the following inner product on 
$T_p\mathcal{P}_+(I)$. 

We first present this in a \emph{dual} formulation, which is known in the literature~\cite{Lott}. 
\begin{definition}[Inner product in dual coordinates]\label{d9}
The inner product 
$g_p^W :T_p\mathcal{P}_+(I)\times T_p\mathcal{P}_+(I)  \rightarrow \mathbb{R}$ 
takes any two tangent vectors $V_{\Phi}$ and $V_{\tilde\Phi}\in T_p\mathcal{P}_+(I)$ to 
\begin{equation}\begin{split}\label{formula}
g_p^W(V_{\Phi}, V_{\tilde\Phi})=(\nabla_G\Phi, \nabla_G\tilde\Phi)_p.
\end{split} 
\end{equation}
\end{definition}
We shall now give the inner product in \emph{primal} coordinates. 
The following matrix operator will be the key to the Riemannian metric tensor of $(\mathcal{P}_+(I), g^W)$. 

\begin{definition}[Linear weighted Laplacian matrix]\label{def2}
	Given $I=\{1,\ldots, n\}$ and a weighted graph $G=(I,E,\omega)$, the matrix function $L(\cdot):\mathbb{R}^n\rightarrow \mathbb{R}^{n\times n}$ is defined by
\begin{equation*}
L(a)=D^{\ts}\Lambda(a)D,\quad a=(a_i)_{i=1}^n\in \mathbb{R}^n,
\end{equation*}
where 
\begin{itemize}
\item 
$D \in \mathbb{R}^{|E|\times n}$ is the discrete gradient operator  
\begin{equation*} 
D_{(i,j)\in {E}, k\in V}=\begin{cases}
\sqrt{\omega_{ij}}, & \textrm{if $i=k$, $i>j$}\\ 
-\sqrt{\omega_{ij}}, & \textrm{if $j=k$, $i>j$}\\
0, & \textrm{otherwise}
\end{cases},
\end{equation*}
\item $-D^{\ts}\in \mathbb{R}^{n\times |E|}$ is the discrete divergence operator, also called oriented incidence matrix \cite{Graph}, 
and 
\item $\Lambda(a)\in \mathbb{R}^{|E|\times |E|}$ is a weight matrix depending on $a$, 
\begin{equation*}
\Lambda(a)_{(i,j)\in E, (k,l)\in E}=\begin{cases}
\frac{a_i+a_j}{2} & \textrm{if $(i,j)=(k,l)\in E$}\\ 
0 & \textrm{otherwise}
\end{cases}.
\end{equation*}
\end{itemize}
\end{definition}

Consider some $p\in\mathcal{P}_+(I)$. From spectral graph theory \cite{Graph}, we know that $L(p)$ can be decomposed as 
\begin{equation*}
L(p)=U(p)\begin{pmatrix}
0 & & &\\
& {\lambda_{1}(p)}& &\\
& & \ddots & \\
& & & {\lambda_{n-1}(p)}
\end{pmatrix}U(p)^{\ts} .
\end{equation*}
Here $0<\lambda_1(p)\leq\cdots\leq \lambda_{n-1}(p)$ are the eigenvalues of $L(p)$ in ascending order, 
and $U(p)=(u_0(p),u_1(p),\cdots, u_{n-1}(p))$ is the corresponding 
orthogonal matrix of eigenvectors with 
$$u_0=\frac{1}{\sqrt{n}}(1,\ldots, 1)^{\ts}.$$
We write $L(p)^{\dagger}$ for the pseudo-inverse of $L(p)$, i.e., 
\begin{equation*}
L(p)^{\dagger}=U(p)\begin{pmatrix}
0 & & &\\
& \frac{1}{\lambda_{1}(p)}& &\\
& & \ddots & \\
& & & \frac{1}{\lambda_{n-1}(p)}
\end{pmatrix}U(p)^{\ts} .
\end{equation*}
With $\sigma=L(p)\Phi$, $\tilde\sigma=L(p)\tilde\Phi$, we see that 
\begin{equation*}\label{relation}
{\sigma}^{\ts}L(p)^{\dagger}\tilde\sigma=\Phi^{\ts}L(p)L(p)^{\dd}L(p)\tilde\Phi=\Phi^{\ts}L(p)\tilde\Phi=(\nabla_G\Phi, \nabla_G\tilde\Phi)_p.
\end{equation*}

Now we are ready to give the inner product in primal coordinates. 
\begin{definition}[Inner product in primal coordinates]
	\label{definition:innerproduct}
The inner product $g^{W}_p:T_p\mathcal{P}_+(I)\times T_p\mathcal{P}_+(I)\rightarrow\mathbb{R}$ is defined by
\begin{equation*}
g^{W}_p(\sigma,\tilde\sigma):={\sigma}^{\ts}L(p)^{\dagger}\tilde\sigma,\quad \textrm{for any $\sigma,\tilde\sigma\in T_p\mathcal{P}_+(I)$}.
\end{equation*}
\end{definition}
In other words, the variational problem from Definition~\ref{def_metric} is a minimization of geometry energy functional in $\mathcal{P}_+(I)$, i.e., 
\begin{equation*}
W( p^0, p^1)^2=\inf_{p(t)\in \mathcal{P}_+(I),t\in[0,1]}\Big\{\int_0^1\dot p(t)^{\ts}L(p(t))^{\dd}\dot p(t)dt~\colon~ p(0)= p^0,~ p(1)= p^1\Big\}.
\end{equation*}
This defines a Wasserstein Riemannian structure on the probability simplex. For more details of Riemannian formulas see \cite{LiG}. Following~\cite{Lafferty} we could call $(\mathcal{P}_+(I), g^W)$ discrete density manifold. However, this could be easily confused with other notions from information geometry, and hence we will use the more explicit terminology {\em Wasserstein statistical manifold}, or Wasserstein manifold for short. 

\section{Wasserstein statistical manifold}
\label{sec3}

In this section we study parametric probability models endowed with the $L^2$-Wasserstein Riemannian metric. 
We define this in the natural way, by pulling back the Riemannian structure from the Wasserstein manifold that we discussed in the previous section. This allows us to introduce a natural gradient flow on the parameter space of a statistical model.  

Consider a statistical model defined by a triplet $(\Theta, I, p)$. 
Here, $I=\{1,\ldots, n\}$ is the sample space, 
$\Theta$ is the parameter space, which is an open subset of $\mathbb{R}^d$, $d\leq n-1$, 
and $p\colon \Theta\rightarrow \mathcal{P}_+(I)$ is the parametrization function, 
\begin{equation*}
 p(\theta)=(p_i(\theta))_{i=1}^n,\quad \theta\in \Theta. 
\end{equation*}
In the sequel we will assume that $\textrm{rank}(J_{\theta} p(\theta))=d$, so that the parametrization is locally injective. 

We define a Riemannian metric $g$ on $\Theta$ as the pull-back of metric $g^W$ on $\mathcal{P}_+(I)$. 
In other words, we require that $p\colon (\Theta,g)\rightarrow (\mathcal{P}_+(I), g^W)$ is an isometric embedding: 
\begin{equation*}
\begin{split}
g_\theta(a,b):=&g^W_{ p(\theta)}(d p(\theta)(a), d p(\theta)(b))\\
=&\big(d p(\theta)(a)\big)^{\ts}L( p(\theta))^{\dd}\big(d p(\theta)(b)\big).
\end{split}
\end{equation*}
Here $d p(\theta)(a)=\big(\sum_{j=1}^n\frac{\partial p_i(\theta)}{\partial\theta_j}a_j\big)_{i=1}^n=J_\theta p(\theta)a$, where $J_\theta p(\theta)$ is the Jacobi matrix of $ p(\theta)$ with respect to $\theta$. We arrive at the following definition.
\begin{definition}
For any pair of tangent vectors $a,b\in T_\theta \Theta=\mathbb{R}^d$, define 
\begin{equation*}
g_\theta(a,b):=a^{\ts}J_\theta  p(\theta)^{\ts}L( p(\theta))^{\dd}J_\theta  p(\theta)b,
\end{equation*}
where $J_\theta p(\theta)=(\frac{\partial p_i(\theta)}{\partial \theta_j})_{1\leq i\leq n, 1\leq j\leq d}\in \mathbb{R}^{n\times d}$ is the Jacobi matrix of the parametrization $p$, and $L( p(\theta))^{\dd}\in \mathbb{R}^{n\times n}$ is the pseudo-inverse of the linear weighted Laplacian matrix. 
\end{definition}

This inner product is consistent with the restriction of the Wasserstein metric $g^W$ to $ p(\Theta)$. 
For this reason, we call $ p(\Theta)$, or $(\Theta, I, p)$, together with the induced Riemannian metric $g$, \emph{Wasserstein statistical manifold}. 

We need to make sure that the embedding procedure is valid, because the metric tensor $L(p)^{\dd}$ is only of rank $n-1$. The next lemma shows that $(\Theta, g)$ is a well defined $d$-dimensional Riemannian manifold. 
\begin{lemma}
For any $\theta\in \Theta$, we have
\begin{equation*}
\lambda_{\min}(\theta)=\inf_{a\in\mathbb{R}^d, \|a\|_2=1}g_\theta(a, a)>0.
\end{equation*}
In addition, $g_{\theta}$ is smooth as a function of $\theta$, so that $(\Theta, g)$ is a smooth Riemannian manifold. 
\end{lemma}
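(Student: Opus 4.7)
The plan is to establish positive definiteness of the quadratic form $g_\theta$ on $T_\theta\Theta=\mathbb{R}^d$ first, then extract the strict lower bound $\lambda_{\min}(\theta)>0$ by a compactness argument, and finally argue smoothness through a standard trick for the pseudo-inverse of graph Laplacians.

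For positive definiteness, I would proceed as follows. First, $L(p(\theta))^{\dd}$ is symmetric positive semi-definite by Definition~\ref{def2} and the spectral description preceding Definition~\ref{definition:innerproduct}; in particular $g_\theta(a,a)\geq 0$. Assuming the graph $G$ is connected, the kernel of $L(p)$ for $p\in\mathcal{P}_+(I)$ is exactly the span of $u_0=\tfrac{1}{\sqrt{n}}\mathbf{1}$, since all weights $\tfrac{p_i+p_j}{2}\omega_{ij}$ on edges are strictly positive; therefore the kernel of $L(p)^\dd$ is also $\operatorname{span}\{\mathbf{1}\}$, and $L(p)^\dd$ is strictly positive on $T_p\mathcal{P}_+(I)=\mathbf{1}^\perp$. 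The crucial observation is that for any $a\in\mathbb{R}^d$ the vector $J_\theta p(\theta)\,a$ lies in $T_{p(\theta)}\mathcal{P}_+(I)$: differentiating $\sum_{i=1}^n p_i(\theta)=1$ in $\theta$ shows $\mathbf{1}^\ts J_\theta p(\theta)=0$. Combining these, if $a\neq 0$ then by the full-rank assumption $\operatorname{rank}(J_\theta p(\theta))=d$ we get $J_\theta p(\theta)\,a\neq 0$, and it is a nonzero element of $\mathbf{1}^\perp$, on which $L(p(\theta))^\dd$ is strictly positive. Hence $g_\theta(a,a)>0$.

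To pass from pointwise positivity to $\lambda_{\min}(\theta)>0$, I would invoke compactness: $a\mapsto g_\theta(a,a)$ is continuous (a quadratic form) on the unit sphere $\{a\in\mathbb{R}^d:\|a\|_2=1\}$, which is compact, so the infimum is attained at some $a^*\neq 0$, and by the previous paragraph $g_\theta(a^*,a^*)>0$.

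For smoothness of $g_\theta$ in $\theta$, the only nontrivial ingredient is smoothness of $\theta\mapsto L(p(\theta))^\dd$. Since $p(\theta)$ is smooth in $\theta$ and $L(p)=D^\ts\Lambda(p)D$ is linear in $p$, it suffices to show that $p\mapsto L(p)^\dd$ is smooth on $\mathcal{P}_+(I)$. The trick is to use the identity
\begin{equation*}
L(p)^{\dd}=\bigl(L(p)+\tfrac{1}{n}\mathbf{1}\mathbf{1}^{\ts}\bigr)^{-1}-\tfrac{1}{n}\mathbf{1}\mathbf{1}^{\ts},
\end{equation*}
which follows from the spectral decomposition of $L(p)$ since $L(p)$ has the constant eigenvector $u_0=\mathbf{1}/\sqrt{n}$ with eigenvalue $0$ and all other eigenvalues strictly positive. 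The matrix $L(p)+\tfrac{1}{n}\mathbf{1}\mathbf{1}^\ts$ is then strictly positive definite for every $p\in\mathcal{P}_+(I)$, hence invertible, and its inverse depends smoothly on $p$ by the usual smoothness of matrix inversion. Composing with the smooth map $\theta\mapsto p(\theta)$ yields smoothness of $g_\theta$, and together with positive definiteness this gives the Riemannian manifold structure.

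The main obstacle is the smoothness of the pseudo-inverse, which a priori is only continuous where the rank is locally constant; the identity above is the cleanest way around this, and it works precisely because connectivity of $G$ combined with $p\in\mathcal{P}_+(I)$ pins down the kernel of $L(p)$ uniformly in $p$.
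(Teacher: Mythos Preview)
Your argument for positive definiteness is correct and is essentially the same as the paper's: both hinge on the two observations that $J_\theta p(\theta)\,a\in\mathbf{1}^\perp$ (from differentiating $\sum_i p_i(\theta)=1$) and that $L(p(\theta))^{\dd}$ is strictly positive on $\mathbf{1}^\perp$, combined with the full-rank hypothesis on $J_\theta p(\theta)$. The paper phrases it as a contradiction (assume $g_\theta(a,a)=0$, deduce $J_\theta p(\theta)a=c u_0$, then $c=0$, then $a=0$), while you argue the contrapositive directly; the content is identical. Your explicit compactness step is harmless but redundant, since positive definiteness of the $d\times d$ matrix $G(\theta)$ already forces its smallest eigenvalue to be strictly positive.

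Where your proposal genuinely adds to the paper is the smoothness claim: the paper's proof simply does not address it. Your identity $L(p)^{\dd}=(L(p)+\tfrac{1}{n}\mathbf{1}\mathbf{1}^{\ts})^{-1}-\tfrac{1}{n}\mathbf{1}\mathbf{1}^{\ts}$ is correct (it follows immediately from the spectral decomposition, since adding $u_0u_0^{\ts}$ shifts only the zero eigenvalue to $1$), and it cleanly reduces smoothness of the pseudo-inverse to smoothness of an ordinary inverse of a positive definite matrix depending smoothly on $p$. This is a standard and effective way to handle the issue you correctly flag, namely that the Moore--Penrose pseudo-inverse is not smooth in general unless the rank is constant, which here is guaranteed by connectivity of $G$ and strict positivity of $p$.
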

\begin{proof}
We only need to show that $J_\theta  p(\theta)^{\ts}L( p(\theta))^{\dd}J_\theta  p(\theta)\in \mathbb{R}^{d\times d}$ is a positive definite matrix.
Consider 
\begin{equation*}
a^{\ts}J_\theta  p(\theta)^{\ts}L( p(\theta))^{\dd}J_\theta  p(\theta)a=0,
\end{equation*}
where $0\in \mathbb{R}^{n-1}$. Since $L(p)$ only has one simple eigenvalue $0$ with eigenvector $u_0$, then 
\begin{equation}\label{a}
J_\theta  p(\theta)a=cu_0,\quad \textrm{for some constant $c\in \mathbb{R}^1$.}
\end{equation}
Since $u_0^{\ts} p(\theta)=\frac{1}{\sqrt{n}}\sum_{i=1}^np_i(\theta)=0$, we have that  $u_0^{\ts}\frac{\partial p(\theta)}{\partial\theta_j}=\frac{1}{\sqrt{n}}\sum_{i=1}^n\frac{\partial p_i(\theta)}{\partial\theta_j}=0$, i.e., 
\begin{equation*}
u_0^{\ts}J_\theta  p(\theta)=0.
\end{equation*}
Left multiply $u_0$ into \eqref{a}, we obtain  
\begin{equation*}
0=u_0^{\ts}J_\theta  p(\theta)a=cu_0^{\ts}u_0=c.
\end{equation*}
Thus $c=0$, and \eqref{a} forms 
\begin{equation*}
J_\theta  p(\theta)a=0.
\end{equation*}
Since $\textrm{rank}(J_{\theta} p(\theta))=d<n$, we have $a=0$, which finishes the proof.
\end{proof}

We illustrate some geometric calculations on parameter space $(\Theta, g)$. For simplicity of illustration, we assume $\Theta\subset\mathbb{R}^d$, and denote a matrix function $G(\theta)\in \mathbb{R}^{d\times d}$ with $g_\theta(\dot\theta, \dot\theta)=\dot\theta^{\ts}G(\theta)\dot\theta$, i.e.,
\begin{equation}\label{metric_tensor}
G(\theta)=(J_\theta p(\theta))^{\ts}L(p(\theta))^{\dd}(J_\theta p(\theta)).
\end{equation}
Under this notation, given $\theta_0$, $\theta_1\in \Theta$, the Riemannian distance on $(\Theta, g)$ is defined by the geometric action functional:
\begin{equation}\label{Dist}
\textrm{Dist}(\theta_0,\theta_1)^2=\inf_{\theta(\cdot)\in {C^1([0,1];\Theta)}}\Big\{\int_0^1{\dot\theta(t)^{\ts}G(\theta(t))\dot\theta(t)}dt~:~\theta(0)=\theta_0,~\theta(1)=\theta_1\Big\}.
\end{equation}
Denote $\theta(t)=\theta_t$, and $S_t$ is the Legendre transformation of $\dot\theta_t$ in $(\Theta, g)$, then the cotangent geodesic flow satisfies
\begin{equation}\label{cotangent}
\begin{cases}
\dot \theta_t-G(\theta_t)^{-1}S_t=0\\ 
\dot S_t+\frac{1}{2}\frac{\partial}{\partial\theta} S^{\ts}_tG(\theta_t)^{-1}S_t=0.
\end{cases}
\end{equation}

It is worth recalling the following facts. If $p$ is an identity map, then \eqref{cotangent} translates to 
\begin{equation*}
\begin{cases}
\dot p+\textrm{div}_G(p\nabla_G S)=0\\ 
\dot S+\frac{1}{4}\sum_{j\in N(i)}(\nabla_GS)^2=0.
\end{cases}
\end{equation*}
In addition, if $I=\Omega$ and we replace $i$ by $x$ and $p_i(t)$ by $\rho(t,x)$, the above becomes 
\begin{equation*}
\begin{cases}
\frac{\partial \rho(t,x)}{\partial t}+\textrm{div}(\rho(t,x)\nabla S(t,x))=0\\ 
\frac{\partial S(t,x)}{\partial t}+\frac{1}{2}(\nabla S(t,x))^2=0,
\end{cases}
\end{equation*}
which are the standard continuity and Hamilton-Jacobi equations on $\Omega$. 
For these reasons, we call the two equations in \eqref{cotangent}  the {\em continuity equation} and the {\em Hamilton-Jacobi equation} {\em on parameter space}. 

\section{Gradient flow on Wasserstein statistical manifold}\label{secnew}
In this section we introduce the natural Riemannian gradient flow on Wasserstein statistical manifold $(\Theta, g)$. 

\subsection{Gradient flow on parameter space}
Consider a smooth loss function $F\colon \mathcal{P}_+(I)\rightarrow \mathbb{R}$.  Thus we focus on the composition 
$F\circ p\colon \Theta\rightarrow \mathbb{R}$. 
The Riemannian gradient of $F( p(\theta))$ is defined as follows. Given $\nabla_g F( p(\theta))\in T_\theta \Theta$, we have 
\begin{equation}\label{gfdd}
g_\theta(\nabla_g F( p(\theta)), a)=
\nabla_\theta{ F}( p(\theta))\cdot a,\quad 
\textrm{for any}~ a\in T_\theta \Theta,
\end{equation}
where $\nabla_\theta  F( p(\theta))\cdot a=
\sum_{i=1}^d\frac{\partial}{\partial \theta_i}F( p(\theta))a_i$. 
The gradient flow satisfies 
\begin{equation*}
\dot\theta_t=-\nabla_g F( p(\theta_t)).
\end{equation*}

The next theorem establishes an explicit formulation of the gradient flow. 
\begin{theorem}[Wasserstein gradient flow] 
	\label{thetheorem}
The gradient flow of a functional $F\colon \mathcal{P}_+(I)\rightarrow \mathbb{R}$ is given by 
\begin{equation*}
\dot\theta_t=-G(\theta_t)^{-1}\nabla_\theta  F( p(\theta_t)),
\end{equation*}
where $\nabla_{\theta}$ is the Euclidean gradient of $F( p(\theta))$ with respect to~$\theta$. 
More explicitly, 
\begin{equation}\label{a1}
\dot\theta_t=-\Big(J_\theta  p(\theta_t)^{\ts}L( p(\theta_t))^{\dd}J_\theta  p(\theta_t)\Big)^{\dd}J_\theta  p(\theta_t)^{\ts}\nabla_ pF( p(\theta_t),
\end{equation}
where $\nabla_p$ is the Euclidean gradient of $F(p)$ with respect to~$p$. 
\end{theorem}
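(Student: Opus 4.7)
The plan is to derive both formulas directly from the defining relation \eqref{gfdd} of the Riemannian gradient, together with the explicit formula \eqref{metric_tensor} for the metric tensor and the ordinary chain rule. Set $v := \nabla_g F(p(\theta)) \in T_\theta\Theta = \mathbb{R}^d$. The relation \eqref{gfdd} reads $g_\theta(v,a) = \nabla_\theta F(p(\theta))\cdot a$ for every $a\in\mathbb{R}^d$. Substituting \eqref{metric_tensor}, this becomes $a^{\ts} G(\theta) v = (\nabla_\theta F(p(\theta)))^{\ts} a$ for all $a\in\mathbb{R}^d$, which forces $G(\theta) v = \nabla_\theta F(p(\theta))$.

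The lemma preceding Section~\ref{secnew} guarantees that $G(\theta)$ is positive definite, hence invertible, so we may solve $v = G(\theta)^{-1}\nabla_\theta F(p(\theta))$. Since the Riemannian gradient flow is $\dot\theta_t = -v(\theta_t)$, the first asserted formula follows at once. The pseudo-inverse $(\cdot)^{\dd}$ in the second formula coincides with the genuine inverse because $G(\theta)$ has full rank $d$.

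For the explicit expression \eqref{a1}, I would extend $F$ smoothly to an open neighbourhood of $p(\theta)$ in $\mathbb{R}^n$ and invoke the chain rule
$$
\nabla_\theta F(p(\theta)) = J_\theta p(\theta)^{\ts}\,\nabla_p F(p(\theta)).
$$
This is independent of the chosen extension: two extensions differ by a function vanishing on the simplex, whose gradient at $p(\theta)$ is proportional to the all-ones vector $(1,\dots,1)^{\ts}$, and this vector lies in the kernel of $J_\theta p(\theta)^{\ts}$ since $\sum_i \partial p_i/\partial\theta_j=0$, as already recorded in the proof of the preceding lemma. Plugging the chain-rule identity into $\dot\theta_t = -G(\theta_t)^{-1}\nabla_\theta F(p(\theta_t))$ yields \eqref{a1}.

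I do not foresee any real obstacle: the argument is essentially the standard Riemannian gradient derivation in coordinates. The two points that merit brief justification are the invertibility of $G(\theta)$, handed to us by the preceding lemma, and the well-definedness of $J_\theta p(\theta)^{\ts}\nabla_p F(p(\theta))$ despite $F$ being defined only on the simplex, which is handled by the kernel observation above.
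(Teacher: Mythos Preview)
Your proposal is correct and follows essentially the same route as the paper: both derive $G(\theta)v=\nabla_\theta F(p(\theta))$ from the defining relation of the Riemannian gradient, invoke the invertibility of $G(\theta)$ established in the preceding lemma, and then apply the chain rule $\nabla_\theta F(p(\theta))=J_\theta p(\theta)^{\ts}\nabla_p F(p(\theta))$. Your extra remark that $J_\theta p(\theta)^{\ts}$ annihilates the all-ones vector, making $\nabla_p F$ well defined modulo the choice of extension off the simplex, is a point the paper does not spell out but is a welcome clarification rather than a different approach.
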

\begin{proof}
The proof follows directly from \eqref{gfdd}. Notice that 
\begin{equation*}
g_\theta(\nabla_gF( p(\theta)), a)=\nabla_g F( p(\theta))^{\ts}J_\theta  p(\theta)^{\ts}L( p(\theta))^{\dd}J_\theta  p(\theta) a=\nabla_\theta F( p(\theta))^{\ts}a,
\end{equation*}
and $J_\theta  p(\theta)^{\ts}L( p(\theta))^{\dd}J_\theta  p(\theta)$ is an invertible matrix. 
Hence 
\begin{equation*}
\nabla_gF( p(\theta))=\big(J_\theta  p(\theta)^{\ts}L( p(\theta))^{\dd}J_\theta  p(\theta)\big)^{\dd}\nabla_\theta F( p(\theta)).
\end{equation*}
We compute $\nabla_\theta F( p(\theta))$ as
\begin{equation*}
\nabla_\theta F( p(\theta))=\big(\frac{\partial}{\partial\theta_i} F( p(\theta))
\big)_{i=1}^n= \big(\sum_{j=1}^n\frac{\partial}{\partial p_j} F( p(\theta))\cdot\frac{\partial p_j(\theta)}{\partial\theta_i} \big)_{i=1}^n =J_\theta  p(\theta)^{\ts}\nabla_ pF( p(\theta)).
\end{equation*}
This concludes the proof of \eqref{a1}.
\end{proof}

{Equation \eqref{a1} is the generalization of  Wasserstein gradient flow in probability simplex to the one on parameter space. If $p$ is an identity map with the parameter space $\Theta$ equal to the entire probability simplex, then \eqref{a1} is 
\begin{equation*}
\dot p_t=-\nabla_g F(p_t)=\textrm{div}_G(p_t\nabla_G\nabla_p F(p_t)),
\end{equation*}
which is the Wasserstein gradient flow {on the discrete probability simplex}.} In particular, if $I=\Omega$, then it represents 
\begin{equation*}
\partial_t\rho_t=-\nabla_W F(\rho_t)=\operatorname{div}(\rho_t \nabla  \delta_\rho  F(\rho_t)),
\end{equation*}
which is the Wasserstein gradient flow on $\Omega$. From now on, we call \eqref{a1} the {\em Wasserstein gradient flow on parameter space}.

The definition of Wasserstein gradient flow shares many similarities with the {steepest gradient descent defined as follows}. 
Consider  
\begin{equation}\label{min}
\arg\min_{ h\in T_\theta\Theta} F( p(\theta+ h)) \quad \textrm{s.t.}\quad  \frac{1}{2}W( p(\theta),  p(\theta+ h))^2=\epsilon,
\end{equation}
where $\epsilon\in \mathbb{R}_+$ is a given small constant. By taking the second-order Taylor approximation of the Wasserstein distance at $\theta$, we get 
\begin{equation*}
W( p(\theta),  p(\theta+ h))^2= h^{\ts}G(\theta) h+o( h^2),
\end{equation*}
where $G(\theta)$ is the metric tensor of $(\Theta, g)$ defined in \eqref{metric_tensor}, inherited from Wasserstein manifold. 
We take the first-order approximation of $F( p(\theta+ h))$  in \eqref{min} by 
\begin{equation*}
\arg\min_{ h\in T_\theta\Theta} F( p(\theta)) + h^{\ts}\nabla_\theta F( p(\theta)) \quad \textrm{s.t.}\quad \frac{1}{2} h^{\ts}G(\theta) h=\epsilon.
\end{equation*}
By the Lagrangian method with Lagrange multiplier $\lambda$, we have
\begin{equation*}
 h=\lambda G(\theta)^{-1}\nabla_{\theta}F( p(\theta)).
\end{equation*}
The above derivations lead to the Wasserstein natural gradient direction $$\nabla_g F( p(\theta))=G(\theta)^{-1}\nabla_\theta F( p(\theta)).$$ 

\begin{remark}
In the standard Fisher-Rao natural gradient~\cite{NG}, we replace \eqref{min} by  
\begin{equation*}
\arg\min_{ h} F( p(\theta+ h)) \quad \textrm{s.t.}\quad \textrm{KL}( p(\theta) \|  p(\theta+ h))=\epsilon,
\end{equation*}
where $\operatorname{KL}$ stands for the Kullback-Leibler divergence (relative entropy) from $ p(\theta)$ to $ p(\theta+ h)$. 
Our definition changes the KL-divergence by the Wasserstein distance. 
\end{remark}

\subsection{Displacement convexity on parameter space}
The Wasserstein structure on the statistical manifold not only provides us the gradient operator, but also the Hessian operator on $(\Theta, g)$. The latter allows us to introduce the displacement convexity on parameter space. 

We first review some facts. 
One remarkable property of Wasserstein geometry is that it yields a correspondence between differential operators on sample space and differential operators on probability space. 
E.g., the Hessian operator on Wasserstein manifold is equal to the expectation of Hessian operator on sample space. 
 
 An important example is stochastic relaxation. Given $f(x)\in C^{\infty}(\Omega)$, consider
\begin{equation*} 
 F(\rho)=\mathbb{E}_{X\sim\rho}[f(X)]=\int_\Omega f(x)\rho(x)dx.
\end{equation*}
It is known that the Hessian operator of $ F(\rho)$ on Wasserstein manifold satisfies 
\begin{equation*}\label{HessF}
\begin{split}
\operatorname{Hess}_W F(\rho)(V_{\Phi}, V_{\tilde\Phi})=\mathbb{E}_{X\sim\rho}(\operatorname{Hess}f(X) \nabla\Phi(X), \nabla\tilde{\Phi}(X)).
\end{split}
\end{equation*}
One can 
show that  $\operatorname{Hess}f\succeq \lambda \mathbb{I}$ if and only if $\operatorname{Hess}_W F(\rho)(V_\Phi, V_\Phi)\succeq \lambda g^W_\rho(V_\Phi, V_\Phi)$. 
This means that $f$ is $\lambda$-geodesic convex in $(\Omega, g^\Omega)$ if and only if $ F(\rho)$ is $\lambda$-geodesic convex in $(\mathcal{P}(\Omega), g^W)$. In literature \cite{vil2008}, the geodesic convexity on Wasserstein manifold is known as the displacement convexity. 

In this section we would like to extend the displacement convexity to parameter space $\Theta$. In other words, we relate the parameter to the differential structures of sample space via constrained Wasserstein geometry $(\Theta, g)$. 
If $\Theta$ is the full probability manifold, our definition coincides with the classical Hessian operator in sample space. 

\begin{definition}[Displacement convexity on parameter space]
Given $ F\circ p\colon\Theta \rightarrow \mathbb{R}$, we say that $F(p(\theta))$ is $\lambda$-displacement convex if
for any constant speed geodesic $\theta_t$, $t\in[0,1]$ connecting $\theta_0,\theta_1\in (\Theta, g)$, it holds that
\begin{equation*}
F(p(\theta_t))\geq (1-t) F(p(\theta_0))+t F(p(\theta_1))-\frac{\lambda}{2}t(1-t)\operatorname{Dist}(\theta_0,\theta_1)^2,
\end{equation*}
where $\operatorname{Dist}$ is defined in \eqref{Dist}. If $F(p(\theta))=\sum_{i=1}^n f_ip_i(\theta)$ is $\lambda$-displacement convex, then we call $f\in \mathbb{R}^n$ $\lambda$-convex in $(\Theta, I, p)$.
\end{definition}
\begin{remark}
In particular, the displacement convexity of KL divergence relates to the Ricci curvature lower bound on sample space. We elaborate this notion in \cite{RLG}. 
\end{remark}
We next derive the displacement convexity condition for stochastic relaxation. 
\begin{theorem}
Assume $\Theta\subset \mathbb{R}^d$ is a compact set and $f=(f_i)_{i=1}^n\in \mathbb{R}^n$. Then $f$ is $\lambda$-convex if and only if
\begin{equation}\label{DC}
\sum_{i=1}^n p_i(\theta) \Big(\Gamma(\Gamma(f, \Phi),\Phi)-\frac{1}{2}\Gamma(\Gamma(\Phi, \Phi), f)\Big)_i+\sum_{i=1}^nf_i B_{p(\theta)}(V_\Phi, V_\Phi)_i\geq \lambda \sum_{i=1}^n \Gamma(\Phi, \Phi)_i p_i(\theta),
\end{equation}
for any $\Phi\in \mathcal{F}(I)/\mathbb{R}$ and $\theta\in \Theta$. Here $\Gamma\colon \mathbb{R}^n\times \mathbb{R}^n \rightarrow \mathbb{R}^n$ is given by 
\begin{equation*}
\Gamma(\Phi,\tilde\Phi)_i\colon=g_i^I(\nabla_G\Phi, \nabla_G\tilde\Phi)=\frac{1}{2}\sum_{j\in N(i)}\omega_{ij}(\Phi_{i}-\Phi_{j})(\tilde\Phi_{i}-\tilde\Phi_{j}),
\end{equation*}
and $B$ is the second fundamental form given in Proposition~\ref{prop8}.
\end{theorem}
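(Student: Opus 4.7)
The plan is to reduce $\lambda$-displacement convexity to a pointwise Hessian bound on the parameter manifold, and then to compute that Hessian explicitly using the isometric embedding into the Wasserstein probability manifold.

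First, I would invoke the standard Riemannian fact that, on the compact parameter domain, $\lambda$-displacement convexity of $F\circ p$ along every constant-speed geodesic of $(\Theta, g)$ is equivalent to the pointwise bound
\[
\mathrm{Hess}_g(F\circ p)(\dot\theta,\dot\theta)\;\geq\;\lambda\,g_\theta(\dot\theta,\dot\theta),\qquad \forall\,(\theta,\dot\theta)\in T\Theta,
\]
proved in the forward direction by differentiating the convexity inequality twice at $t=0$ along a geodesic and in the reverse direction by integrating the pointwise bound back along any geodesic. Because $p\colon(\Theta,g)\hookrightarrow(\mathcal{P}_+(I),g^W)$ is isometric, the Gauss formula for the Hessian of a restriction gives
\[
\mathrm{Hess}_g(F\circ p)(V_\Phi, V_\Phi)\;=\;\mathrm{Hess}_W F(V_\Phi, V_\Phi)\;+\;g^W_{p(\theta)}\bigl(\mathrm{grad}_W F,\,B_{p(\theta)}(V_\Phi, V_\Phi)\bigr),
\]
valid for any tangent vector $V_\Phi=J_\theta p(\theta)\dot\theta\in T_{p(\theta)}p(\Theta)$.

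For the linear functional $F(p)=f^{\ts}p$, I would compute both right-hand terms. A check shows $\mathrm{grad}_W F=L(p)f$, and the identification $g^W_p(V_\Psi,\sigma)=\Psi^{\ts}\sigma$, valid for any $\sigma\in T_p\mathcal{P}_+(I)$ (which has zero coordinate sum), yields
\[
g^W_{p(\theta)}\bigl(\mathrm{grad}_W F,\,B_{p(\theta)}(V_\Phi, V_\Phi)\bigr)\;=\;\sum_{i=1}^n f_i\,B_{p(\theta)}(V_\Phi, V_\Phi)_i,
\]
matching the second left-hand term of \eqref{DC}. Because $F$ is affine in the ambient coordinates, $\mathrm{Hess}_W F(V_\Phi, V_\Phi)=-f^{\ts}\nabla^W_{V_\Phi}V_\Phi$, and the Christoffel formula \eqref{CT} together with $L(p)^{\dagger}V_\Phi\equiv\Phi\pmod{\mathbb R}$ reduces this ambient Hessian to
\[
-\,f^{\ts}L(V_\Phi)\Phi\;+\;\tfrac12\,f^{\ts}L(p)\bigl(\nabla_G\Phi\circ\nabla_G\Phi\bigr).
\]

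The core step is to recognize this expression as the Bakry--\'Emery bracket $\sum_i p_i\bigl(\Gamma(\Gamma(f,\Phi),\Phi)-\tfrac12\Gamma(\Gamma(\Phi,\Phi),f)\bigr)_i$. I would carry this out by expanding $L(V_\Phi)=D^{\ts}\Lambda(V_\Phi)D$ and $L(p)=D^{\ts}\Lambda(p)D$ edge by edge, applying discrete integration by parts through $D^{\ts}$, and regrouping the resulting edge sums of products $\omega_{ij}(f_i-f_j)(\Phi_i-\Phi_j)$ using the definition of $\Gamma$ and its iterates. The right-hand side of \eqref{DC} is simply $\lambda\,g^W_{p(\theta)}(V_\Phi, V_\Phi)=\lambda\sum_i p_i\Gamma(\Phi,\Phi)_i$, so assembling the three pieces yields \eqref{DC}; letting $\dot\theta$ range over $T_\theta\Theta$ at every $\theta\in\Theta$ then delivers both directions of the equivalence. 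The principal obstacle is the graph-algebraic identification of $-f^{\ts}\nabla^W_{V_\Phi}V_\Phi$ with the discrete $\Gamma_2$-expression: it is the direct analogue of the classical Bakry--\'Emery identity in the continuous case but must be executed carefully, since the bilinear term $L(p)(\nabla_G\Phi\circ\nabla_G\Phi)$ produces, after integration by parts, edge sums involving three distinct differences per edge that must be recombined into the iterated brackets. All remaining ingredients (the Hessian characterization of convexity and the Gauss formula for restrictions) are standard Riemannian geometry and apply here under the smoothness and compactness assumptions on $(\Theta, g)$.
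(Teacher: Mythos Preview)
Your proposal is correct and follows essentially the same route as the paper: reduce $\lambda$-displacement convexity to the pointwise Hessian bound $\mathrm{Hess}_g(F\circ p)\succeq\lambda G(\theta)$ via compactness, split the submanifold Hessian as the ambient Wasserstein Hessian plus the second-fundamental-form term, and then rewrite the ambient Hessian of the linear functional $F(p)=f^{\ts}p$ in dual coordinates as the discrete Bakry--\'Emery expression. The only difference is that the paper outsources the last identification to \cite[Proposition~18]{LiG}, whereas you sketch it directly from the Christoffel formula \eqref{CT}; your pairing $g^W_{p(\theta)}(\mathrm{grad}_W F,\,B)=f^{\ts}B$ is exactly the paper's $B^{\ts}\nabla_pF$, so the two arguments line up term for term.
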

\begin{proof}
If $\Theta$ is a compact set, then the $\lambda$-displacement convexity of $F(p(\theta))$ is equivalent to 
\begin{equation*}
\operatorname{Hess}_g F(p(\theta))\succeq\lambda G(\theta),
\end{equation*}
where $\operatorname{Hess}_g F$ is the Hessian operator in $(\Theta, g)$. We next calculate this Hessian operator explicitly. 
Notice that
\begin{equation*}
\operatorname{Hess}_gF(\sigma, \tilde\sigma)=\operatorname{Hess}_WF(\sigma,\tilde\sigma)+B_{p(\theta)}(\sigma, \tilde\sigma)^{\ts}\nabla_p F( p(\theta)),
\end{equation*}
where $\textrm{Hess}_W$ is the Hessian operator in $(\mathcal{P}_+(I), g^W)$.
Denote the above in dual coordinates, i.e. $\sigma=\tilde\sigma=V_\Phi=V_{\tilde\Phi}=L(p(\theta))\Phi$, and follow the geometric computations in \cite[Proposition~18]{LiG}, we finish the proof. 
\end{proof}
Here $\Gamma$ is the discrete Bakry-Emery Gamma one operator~\cite{BE}. The geometry of Wasserstein manifold is directly related to the expectation of Bakry-Emery Gamma one operators \cite{LiG}. 
In particular, if $p$ is the identity mapping and $I=\Omega$, then our definition \eqref{DC} represents 
\begin{equation*}
\int_\Omega \Big(\Gamma(\Gamma(f, \Phi),\Phi)-\frac{1}{2}\Gamma(\Gamma(\Phi, \Phi), f)\Big)\rho(x)dx\geq \lambda \Gamma(\Phi,\Phi)\rho dx,
\end{equation*}
i.e. 
\begin{equation*}
\int_\Omega \operatorname{Hess}f(x) (\nabla\Phi(x),\nabla\Phi(x))\rho(x)dx\geq  \lambda \int_\Omega g^\Omega_x(\nabla\Phi, \nabla\Phi)\rho(x)dx
\end{equation*}
for any $\rho$, and vector fields $\nabla\Phi$. The above inequality is same as requiring $\textrm{Hess}f\succeq \lambda I$. Our definition extends this concept to parameter space.

\subsection{Numerical methods}
Here we discuss the numerical computation of the Wasserstein metric and the Wasserstein gradient flow. 

Given the gradient flow~\eqref{a}, there are two standard choices of time discretization, namely the forward Euler scheme and the backward Euler scheme. 
Denote the step size by $\lambda>0$. 
The forward Euler method 
computes a discretized trajectory by 
\begin{equation*}
\theta^{k+1}=\theta^k-\lambda \nabla_g  F( p(\theta^k)),
\end{equation*}
while the backward Euler method computes 
\begin{equation*}
\theta^{k+1}=\arg\min_{\theta\in \Theta} F( p(\theta))+\frac{\textrm{Dist}(\theta, \theta^k)^2}{2\lambda},
\end{equation*}
where $\textrm{Dist}$ is the geodesic distance in parameter space $(\Theta, g)$. 

In the information geometry literature, the forward Euler method is often referred to as natural gradient method. 
In Wasserstein geometry, the backward Euler method is often called the Jordan-Kinderlehrer-Otto (JKO) scheme. In the following we give pseudo code for both numerical methods. 

\begin{table}[H]
\begin{tabbing}
aaaaa\= aaa \=aaa\=aaa\=aaa\=aaa=aaa\kill  
   \rule{\linewidth}{0.8pt}\\
   \noindent{\large\bf Natural Wasserstein gradient method}\\
    \rule{\linewidth}{0.8pt}\\ 
  \1 \For $k=1, 2, \ldots$ \quad \textrm{while not converged}\\
1. \3 Choose a suitable step size $\lambda_k>0$\ ; \\ 
2. \3 $\theta^{k+1} = \theta^k-\lambda_k\big((J_{\theta}  p(\theta^k))^{\ts}L( p(\theta^k))^{\dd}J_\theta  p(\theta^k)\big)^{\dd}(J_\theta  p(\theta^k))^{\ts}\nabla_ pF( p(\theta^k))$\ ; \\
  \1 \End\\
   \rule{\linewidth}{0.8pt}
\end{tabbing}
\label{naturalWassersteinpage}
\end{table}

\begin{table}[H]
\begin{tabbing}
aaaaa\= aaa \=aaa\=aaa\=aaa\=aaa=aaa\kill  
   \rule{\linewidth}{0.8pt}\\
   \noindent{\large\bf Natural Jordan-Kinderlehrer-Otto scheme}\\
    \rule{\linewidth}{0.8pt}\\ 
  \1 \For $k=1, 2, \ldots$ \quad \textrm{while not converged}\\
1. \3 Choose a suitable adaptive step size $\lambda_k>0$\ ;\\
2. \3 $\theta^{k+1} = \arg\min_{\theta\in \Theta} F( p(\theta))+\frac{\textrm{Dist}(\theta, \theta^k)^2}{2\lambda_k} $\ ; \\
  \1 \End\\
   \rule{\linewidth}{0.8pt}
\end{tabbing}
\end{table}
In practice, the forward Euler method is usually easier to implement than the backward Euler method. We would also suggest to implement the natural Wasserstein gradient using this method for minimization problems. 
As known in optimization, the JKO scheme can also be useful for non-smooth objective functions. Moreover, the backward Euler method is usually unconditionally stable, which means that one can choose a large step size $h$ for computations.

\section{Examples}
\label{sec4}
\begin{example}[Wasserstein geodesics]
	\label{example:paths}
Consider the sample space $I=\{1,2,3\}$ with an unweighted graph $1-2-3$. 
The probability simplex for this sample space is a triangle in~$\mathbb{R}^3$: 
\begin{equation*}
\mathcal{P}(I)=\Big\{(p_i)_{i=1}^3\in \mathbb{R}^3~:~\sum_{i=1}^3p_i=1, \quad p_i\geq 0\Big\}. 
\end{equation*}
Following Definition~\ref{def_metric}, the $L^2$-Wasserstein distance is given by 
\begin{equation}\label{metric}
W(p^0, p^1)^2:=\inf_{\Phi(t)}\int_0^1 \Big\{(\Phi_1(t)-\Phi_2(t))^2\frac{p_1(t)+p_2(t)}{2}+(\Phi_2(t)-\Phi_3(t))^2\frac{p_2(t)+p_3(t)}{2}\Big\} dt, 
\end{equation}
where the infimum is taken over paths $\Phi\colon[0,1]\rightarrow \mathbb{R}^3$. 
Each $\Phi$ defines $p\colon[0,1]\rightarrow \mathbb{R}^3$ as the solution of the differential equation 
\begin{equation*}
\begin{cases}
\dot p_1=&(\Phi_1-\Phi_2)\frac{p_1+p_2}{2}\\
\dot p_2=&(\Phi_2-\Phi_1)\frac{p_1+p_2}{2}+(\Phi_2-\Phi_3)\frac{p_2+p_3}{2}\\
\dot p_3=&(\Phi_3-\Phi_2)\frac{p_2+p_3}{2}
\end{cases} 
\end{equation*}
with boundary condition $p(0)=p^0$, $p(1)=p^1$. 

Consider local coordinates in~\eqref{metric}. We parametrize a probability vector as $p=(p_1, 1-p_1-p_3, p_3)$, with parameters $(p_1, p_3)$. 
Then \eqref{metric} can be written as 
\begin{equation}\label{WM}
W(p^0, p^1)^2:=\inf_{p(t)\colon p(0)=p^0,~p(1)=p^1}\int_0^1\Big\{\frac{\dot p_1(t)^2}{1-p_3(t)}+\frac{\dot p_3(t)^2}{1-p_1(t)}\Big\}dt.
\end{equation}
where the infimum is taken over paths $p\colon[0,1]\rightarrow \mathcal{P}_+(I)$. We also compare the Wasserstein metric \eqref{WM} with the Fisher-Rao metric. In this case, the Fisher-Rao metric function is given by
\begin{equation*}
\textrm{FR}(p^0, p^1)^2:=\inf_{p(t)\colon p(0)=p^0,~p(1)=p^1}\int_0^1\Big\{\frac{\dot p_1(t)^2}{p_1(t)}+\frac{(\dot p_1(t)+\dot p_3(t))^2}{p_2(t)}+\frac{\dot p_3(t)^2}{p_3(t)}\Big\}dt.
\end{equation*}
This clearly demonstrates the difference between Wasserstein Riemannian metric and Fisher-Rao metric. We would also compare the dynamical optimal transport with the statistical one. In particular, if the ground metric is given by $c_{12}=1$, $c_{13}=2$, $c_{23}=1$, 
which is of homogenous degree one type. Then the statistical optimal transport defined by 
\begin{equation*}
d(p^0, p^1)=\inf_{\pi\geq 0}\Big\{c_{12}\pi_{12}+c_{13}\pi_{13}+c_{12}\pi_{23}\colon \sum_{i=1}^3\pi_{ij}=p^0_j,~\sum_{j=1}^3\pi_{ij}=p^1_i\Big\},
\end{equation*}
can be reformulated by 
\begin{equation*}
{d}(p^0, p^1)=\inf_{p(t)\colon p(0)=p^0,~p(1)=p^1}\int_0^1\Big\{|\dot p_1(t)|+|\dot p_3(t)|\Big\}dt.
\end{equation*}
Here the statistical formulation does not provide a Riemannian metric, but gives a Finslerian metric. 

We next compute  \eqref{WM} numerically\footnotemark for different choices of the boundary conditions $p^0$, $p^1$. \footnotetext{We use the \emph{direct method}, which is a standard technique in optimal control. Here the time is discretized, and the sum replacing the integral is minimized by means of gradient descent with respect to $(p(t)_i)_{i=1,3, t\in\{t_1,\ldots, t_N\}} \in \mathbb{R}^{2\times N}$. A 
	reference for these techniques is~\cite{Li_COM}.} We fix three distributions 
\begin{equation}
q^1=\frac18(6, 1, 1), \quad 
q^2=\frac18(1, 6, 1),\quad 
q^3=\frac18(1, 1, 6) 
\end{equation} 
and solve \eqref{WM} for three choices of the boundary conditions:  
\begin{equation}
 p^0=q^1,\; p^1=q^2 ; \quad 
 p^0=q^1,\; p^1=q^3 ; \quad 
p^0=q^2,\; p^1=q^3  . 
\end{equation} 
This gives us a geodesic triangle between $q^1, q^2, q^3$, which is illustrated in Figure~\ref{fig1}. 
It can be seen that $(\mathcal{P}_+(I), W)$ has a non Euclidean geometry. 
Moreover, we see that the geodesics depend on the graph structure on sample space, where state $2$ is qualitatively different from states $1$ and~$3$. 

\begin{figure}
	\begin{center}
		\begin{tikzpicture}[->,shorten >=1pt,auto,node distance=2cm,
		thick,main node/.style={circle,fill=blue!20,draw,minimum size=0.5cm,inner sep=0pt]} ]
		\node[main node] (1) {$1$};
		\node[main node] (2) [right of=1]  {$2$};
		\node[main node](3)[right of=2]{$3$};
		\path[-]
		(1) edge node {} (2)
		(2) edge node{} (3);
		\end{tikzpicture}
	\end{center}
	\centering
	\begin{tikzpicture}
	\node[] (A) at(0,0) {\includegraphics[clip=true, trim=2cm 10.5cm 10.5cm 10.5cm, width=0.4\textwidth]{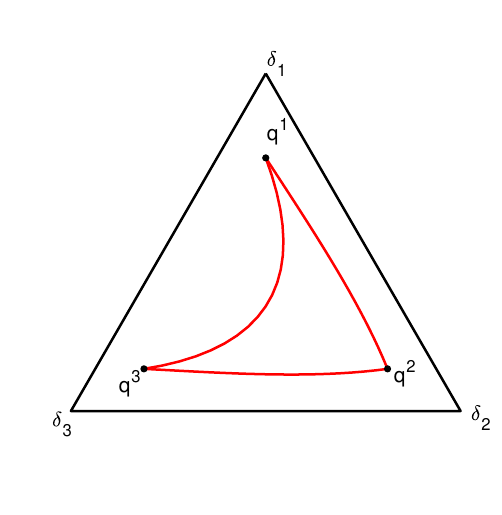}}; 
	\node[] (B) at (7,0) {\includegraphics[clip=true, trim=10.5cm 10.5cm 2cm 10.5cm, width=0.4\textwidth]{wpaths}};
	\draw[->] (2,1) to [bend right]node[below] {$\theta$}  (4,1) ;
	\draw[<-] (2,1.5) to [bend left]node[above] {$p$}  (4,1.5) ;
	\end{tikzpicture}
	\caption{The Wasserstein geodesic triangle from Example~\ref{example:paths} plotted in the probability simplex (left) and in exponential parameter space (right). 
		The path connecting $q^1$ and $q^3$ bends towards~$q^2$; something that does not happen for the other two paths. 
		This illustrates how, as a result of the ground metric on sample space, state $2$ is treated differently from $1$ and $3$. 
	}
	\label{fig1}
\end{figure}

We can make the same derivations in terms of an exponential parametrization. 
Consider the parameter space $\Theta=\{\theta=(\theta_1,\theta_2)\in \mathbb{R}^2\}$ and the parametrization 
$p\colon \Theta\rightarrow \mathcal{P}_+(I)$ with 
\begin{equation*}
p_1(\theta)=\frac{e^{\theta_1}}{e^{\theta_1}+e^{\theta_2}+1}, \quad p_3(\theta)=\frac{e^{\theta_2}}{e^{\theta_1}+e^{\theta_2}+1},\quad p_2(\theta)=1-p_1(\theta)-p_3(\theta)=\frac{1}{e^{\theta_1}+e^{\theta_2}+1}.
\end{equation*}
We rewrite the Wasserstein metric \eqref{WM} in terms of $\theta$. 
Denote $ p(\theta^k)=p^k$, $k=0, 1$. 
Then the Wasserstein metric in the coordinate system $\theta$ is 
\begin{equation*}
\operatorname{Dist}(\theta^0,\theta^1)^2=\inf_{\theta(t)\colon \theta(0)=\theta^0,~\theta(1)=\theta^1}\Big\{\int_0^1\dot\theta^{\ts}J_\theta(p_1, p_3)^{\ts}\begin{pmatrix}
\frac{1}{1-p_3(\theta)} & 0\\
0 & \frac{1}{1-p_1(\theta)}
\end{pmatrix}  J_\theta(p_1, p_3) \dot\theta dt\Big\}.
\end{equation*}
The resulting geodesic triangle in $\Theta$ is plotted in the right panel of Figure~\ref{fig1}. 

For comparison, we compute the \emph{exponential} geodesic triangle between the same distributions $q^1,q^2,q^3$. This is shown in Figure~\ref{fig1a}. 
In this case, there is no distinction between the states $1,2,3$ and the three paths are symmetric. 
The exponential geodesic between two distributions $p^0$ and $p^1$ is given by $(p^0)^{1-t}(p^1)^{t}/ \sum_x(p^0)^{1-t}(p^1)^{t}$, $t\in[0,1]$. 

\begin{figure}
	\centering
\begin{tikzpicture}
\node[] (A) at(0,0) {\includegraphics[clip=true, trim=2cm 10.5cm 10.5cm 10.5cm, width=0.4\textwidth]{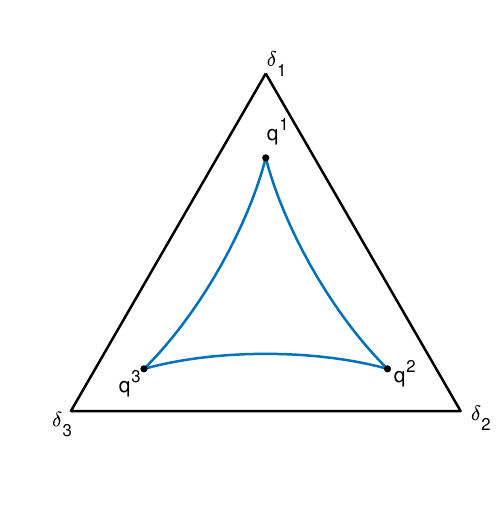}}; 
\node[] (B) at (7,0) {\includegraphics[clip=true, trim=10.5cm 10.5cm 2cm 10.5cm, width=0.4\textwidth]{epaths}};
\draw[->] (2,1) to [bend right]node[below] {$\theta$}  (4,1) ;
\draw[<-] (2,1.5) to [bend left]node[above] {$p$}  (4,1.5) ;
\end{tikzpicture}
	\caption{Exponential geodesic triangle plotted in the probability simplex (left) and in exponential parameter space (right). 
	Exponential geodesics correspond to straight lines in exponential parameter space. 
	}
	\label{fig1a}
\end{figure}

\end{example}

\begin{example}[Wasserstein gradient flow on an independence model]
	\label{example:independencemodel}
	
We next illustrate the Wasserstein gradient flow over the independence model of two binary variables. 
The sample space is $I = \{-1,+1\}^2$. 
For simplicity, we denote the states by $a=(-1,-1)$, $b=(-1, +1)$, $c=(+1,-1)$, $d=(+1,+1)$. 
We consider the square graph 
$$
\begin{matrix}
b -d\\
| \phantom{--}|\\
a- c
\end{matrix}
$$
with vertices $I$, edges $E=\{\{a, b\},\{b,d\}, \{a,c\},\{c,d\} \}$, 
and weights $\omega
=( \omega_{ab}, \omega_{bd}, \omega_{ac}, \omega_{cd})\in\mathbb{R}^E$ attached to the edges. 
The edge weights correspond to the inverse squared ground metric that we assign to  the sample space $I$. 
The probability simplex for this sample space is the tetrahedron 
\begin{equation*}
\mathcal{P}(I) = \Big\{(p(x))_{x\in I}\in \mathbb{R}^4~:~\sum_{x\in I}p(x)=1, \quad p(x)\geq 0\Big\}. 
\end{equation*}
Following Definition~\ref{definition:innerproduct}, the Wasserstein metric tensor is given by 
$g_p^W=L(p)^{\dd}$, 
which is the inverse of the linear weighted Laplacian metric $L$ from Definition~\ref{def2}. In this example the latter is 
\begin{equation*}
L(p)=
\small{  
\begin{pmatrix}
\omega_{ab}\frac{p_a+p_b}{2}+\omega_{ac}\frac{p_a+p_c}{2}& -\omega_{ab}\frac{p_a+p_b}{2}&-\omega_{ac}\frac{p_a+p_c}{2}& 0  \\
-\omega_{ab}\frac{p_a+p_b}{2}& \omega_{ab}\frac{p_a+p_b}{2}+\omega_{bd}\frac{p_b+p_d}{2}  & 0 &  -\omega_{bd}\frac{p_b+p_d}{2}\\
    -\omega_{ac}\frac{p_a+p_c}{2}        &    0    &\omega_{ac}\frac{p_a+p_c}{2}+\omega_{cd} \frac{p_c+p_d}{2}& -\omega_{cd}\frac{p_c+p_d}{2} \\
     0    &\omega_{bd}\frac{p_b+p_d}{2} & -\omega_{cd}\frac{p_c+p_d}{2} &  \omega_{bd}\frac{p_b+p_d}{2}+\omega_{cd}\frac{p_c+p_d}{2}   \\
\end{pmatrix}
}
.
\end{equation*}

The independence model consist of the joint distributions that satisfy $p(x_1, x_2)=p(x_1)p(x_2)$. 
This can be parametrized in terms of $\Theta=\{\xi=(\xi_1,\xi_2)\in [0,1]^2\}$, where
 $\xi_1=p_1(x_1=+1)$, $\xi_2=p_2(x_2=+1)$ describe the marginal probability distributions. 
The parametrization $p\colon \Theta\rightarrow \mathcal{P}(I)$ is then 
 \begin{equation*}
  p(\xi)(x_1,x_2)=\begin{cases}
(1-\xi_1)(1-\xi_2)  &\textrm{if $(x_1,x_2)=(-1, -1)$} \\
 (1-\xi_1)\xi_2 & \textrm{if $(x_1,x_2)=(-1, +1)$} \\
\xi_1(1-\xi_2)& \textrm{if $(x_1,x_2)=(+1, -1)$}  \\
 \xi_1\xi_2& \textrm{if $(x_1,x_2)=(+1, +1)$}  
 \end{cases}.
 \end{equation*}
The model $ p(\Theta) \subset \mathcal{P}(I)$ is a two dimensional manifold. 
The parameter space $\Theta$ inherits the Riemannian structure  $g^W$ from $\mathcal{P}(I)$, which is computed as follows. 
Denote the Jacobi matrix of the parametrization by
\begin{equation*}
J_\xi p(\xi)=\begin{pmatrix}
-(1-\xi_2)&-(1-\xi_1) \\
-\xi_2&1-\xi_1 \\ 
1-\xi_2& -\xi_1 \\
\xi_2&\xi_1
\end{pmatrix}\in \mathbb{R}^{4\times 2}.
\end{equation*}
Then $g^W$ induces a metric tensor on $\Theta$ given by 
$$G(\xi)=J_\xi p(\xi)^{\ts} L( p(\xi))^{\dd} J_\xi( p(\xi))\in \mathbb{R}^{2\times 2}.$$

We now consider a discrete optimization problem via stochastic relaxation and illustrate the gradient flow. 
Consider following potential  function on $I$, taken from~\cite{Malago2}: 
$$f(x_1,x_2)=x_1+2x_2+3x_1x_2=\begin{cases}
0 &\textrm{if $(x_1,x_2)=(-1, -1)$} \\
-2& \textrm{if $(x_1,x_2)=(-1, +1)$} \\
-4 & \textrm{if $(x_1,x_2)=(+1, -1)$}  \\
6 & \textrm{if $(x_1,x_2)=(+1, +1)$}  
 \end{cases}.
$$
We are to minimize $F(\p)=\mathbb{E}_\p[ f]$, i.e.,  
\begin{equation*}
F( p(\xi))=\sum_{(x_1,x_2)\in I}f(x_1,x_2)p_1(x_1)p_2(x_2)=-4\xi_1-2\xi_2+12\xi_1\xi_2.
\end{equation*}

By Theorem~\ref{thetheorem}, the Wasserstein gradient flow is 
\begin{equation*}
\dot\xi=-G(\xi)^{-1}\nabla_\xi F( p(\xi)). 
\end{equation*}
For our function, the standard Euclidean gradient is $\nabla_\xi F( p(\xi))=(-4+12\xi_2, -2+12\xi_1)^{\ts}$. 
The matrix $G$ is computed numerically from $J$ and $L$. 
 
\begin{figure}[h]
\centering
\begin{tabular}{cc}
\scalebox{.9}{ 
	\begin{tikzpicture}[scale=0.5][-,shorten >=1pt,auto,node distance=2cm,
	thick,main node/.style={circle,fill=blue!20,draw,minimum size=0.1cm,inner sep=0pt]}]
	\node[main node] (1) {$a$};
	\node[main node] (2) [above =2cm of 1]  {$b$};
	\node[main node] (3) [right =2cm of 1]  {$c$};
	\node[main node] (4) [above =2cm of 3]  {$d$};
	
	\node[] [below =0.1cm of 1] {\textcolor{gray}{$0$}};
	\node[] [above =0.1cm of 2]  {\textcolor{gray}{$-2$}};
	\node[] [below =0.1cm of 3] {\textcolor{gray}{$-4$}};
	\node[] [above =0.1cm of 4] {\textcolor{gray}{$6$}};
		
	\path[draw,thick]
	(1) edge node [left]{$\omega_{ab}$} (2)
	(2) edge node [above]{$\omega_{bd}$} (4)
	(1) edge node [below]{$\omega_{ac}$} (3)
	(4) edge node [right]{$\omega_{cd}$} (3);
	\end{tikzpicture}
}
& 
\includegraphics[clip=true, trim=5cm 10cm 5cm 10cm, width=0.4\textwidth]{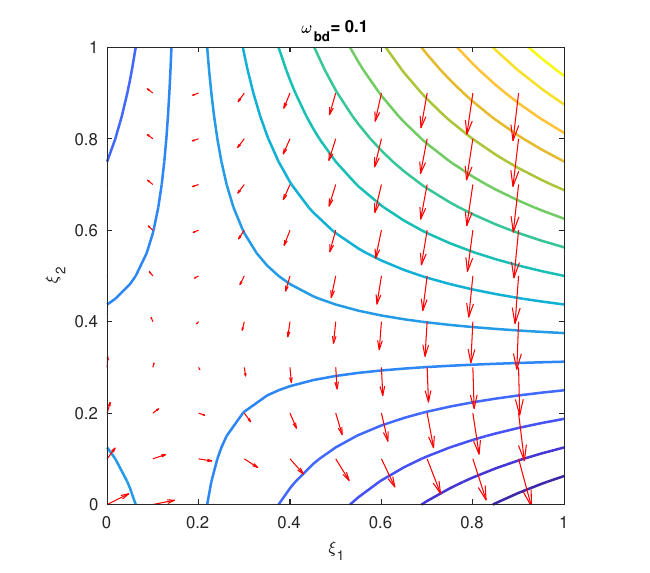} \\
\includegraphics[clip=true, trim=5cm 10cm 5cm 10cm, width=0.4\textwidth]{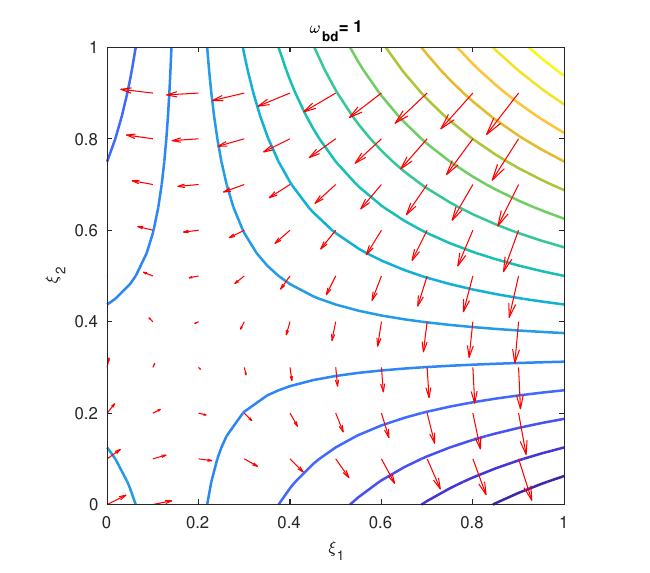}
& 
\includegraphics[clip=true, trim=5cm 10cm 5cm 10cm, width=0.4\textwidth]{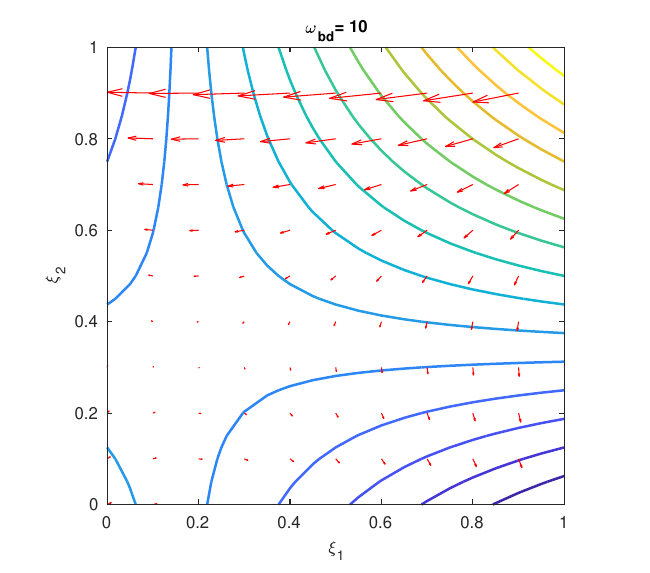}
\end{tabular}
\caption{Negative Wasserstein gradient on the parameter space $[0,1]^2$ of the two-bit independence model from Example~\ref{example:independencemodel}. 
We fix the state graph shown on the top left, and a function $f$ with values shown in gray next to the state nodes. 
We evaluate the gradient flow for three different choices of the graph weight $\omega_{bd}$. 
When the weight $\omega_{bd}$ is small, the flow from $d$ towards $b$ (a local minimum) is suppressed. 
A large weight has the opposite effect. 
The contours are for the objective function $F( p(\xi)) = \mathbb{E}_{ p(\xi)}[f]$. 
}
\label{fig2}
\end{figure}

In Figure~\ref{fig2} we plot the negative Wasserstein gradient vector field in the parameter space $\Theta=[0,1]^2$. As can be seen, the Wasserstein gradient direction depends on the ground metric on sample space (encoded in the edge weights). 
If $b$ and $d$ are far away, there is higher tendency to go $c$, rather than $b$. This reflects the intuition that, the more ground distance between $b$ and $d$, 
the harder for the probability distribution to move from its concentration place $b$ to $d$. 
We observe that the the attraction region of the two local minimizers changes dramatically as the ground metric between $b$ and $d$ changes, i.e., as  $\omega_{bd}$ varies from $0.1$, $1$, $10$. This is different in the Fisher-Rao gradient flow, plotted in Figure~\ref{fig2a}, which is independent of the ground metric on sample space.

The above result illustrates the displacement convexity shown in Theorem \ref{DC}. Different ground metric exhibits different displacement convexity of $f$ on parameter space $(\Theta, g)$. These properties lead to different convergence regions of Wasserstein gradient flows. 

\begin{figure}[h]
	\centering
\includegraphics[clip=true, trim=5cm 10cm 5cm 10cm, width=0.4\textwidth]{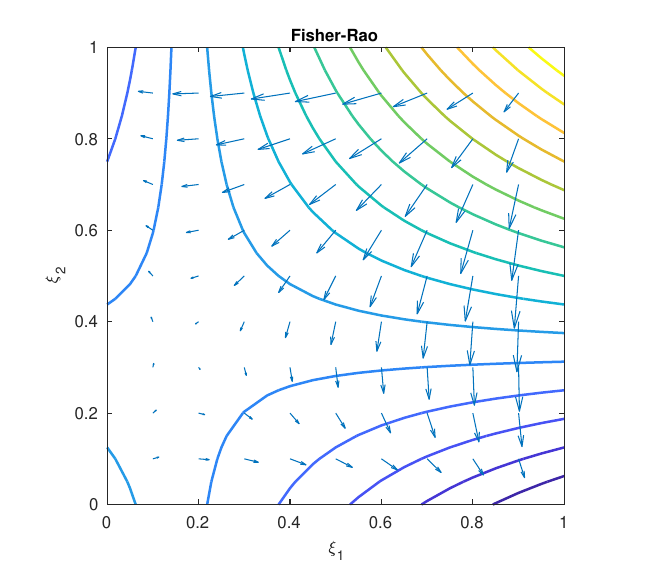}
\caption{Fisher-Rao gradient vector field for the same objective function of Figure~\ref{fig2}.}
\label{fig2a}
\end{figure}

\end{example}

\begin{example}[Wasserstein gradient for maximum likelihood estimation]
In maximum likelihood estimation, we seek to minimize the Kullback-Leibler divergence 
$$
\operatorname{KL}(q \| p(\theta))=\sum_{x\in I}q_x\log\frac{q_x}{p_x(\theta)},
$$ 
where $q$ is the empirical distribution of some given data. The Wasserstein gradient flow of $\operatorname{KL}(q \| p(\theta))$ satisfies 
\begin{equation*}
\frac{d\theta}{dt}=\Big(J_\theta  p(\theta)^{\ts}L( p(\theta))^{\dd}J_\theta  p(\theta)\Big)^{\dd}J_\theta  p(\theta)^{\ts} 
\left(\frac{q}{ p(\theta)}\right). 
\end{equation*}

In this example we consider hierarchical log-linear models as our parametrized probability models, which are an important type of exponential families describing interactions among groups of random variables. 
Concretely, for an inclusion closed set $S$ of subsets of $\{1,\ldots, n\}$, the hierarchical model $\mathcal{E}_S$ for $n$ binary variables is the set of distributions of the form 
$$
p_x(\theta) = \frac{1}{Z(\theta)}\exp\Big(\sum_{\lambda\in S} \theta_\lambda \phi_\lambda(x)\Big), \quad x\in\{0,1\}^n, 
$$ 
for all possible choices of parameters $\theta_\lambda\in\mathbb{R}$, $\lambda\in S$. 
Here the $\phi_\lambda$ are real valued functions with $\phi_\lambda(x)=\phi_\lambda(y)$ whenever $x_i=y_i$ for all $i\in\lambda$. 
We consider two different choices of 
$\phi_\lambda$, $\lambda\in S$, corresponding to two different parametrizations of the model. 
\begin{itemize}
	\item 
	Our first choice are the orthogonal characters 
	$$
	\sigma_\lambda(x) = \prod_{i\in\lambda} (-1)^{x_i} = e^{ i \pi  \langle 1_\lambda , x\rangle }, \quad x\in\{0,1\}^n, 
	$$
	which can be interpreted as a Fourier basis for the space of real valued functions over binary vectors. 
	\item 
	As an alternative choice we consider the basis of monomials 
	$$
	\pi_\lambda(x) = \prod_{i\in\lambda} x_i, \quad x\in\{0,1\}^n, 
	$$
	which is not orthogonal, but is frequently used in practice. 
\end{itemize}
When $S=\{ \lambda\subseteq\{1,\ldots, n\}\colon |\lambda|\leq k\}$, the model is called $k$-interaction model. 
We consider $k$-interaction models with $k=1,\ldots, n$ (independence model, pair interaction model, three way interaction model, etc.), with the two parametrizations, $\sigma$ (orthogonal sufficient statistics) and $\pi$ (non-orthogonal sufficient statistics).

We compare the Euclidean, Fisher, and Wasserstein gradients. 
For binary variables, the Hamming distance is a natural ground metric notion. 
Accordingly, we define the Wasserstein metric with the uniformly weighted graph of the binary cube. 
We sampled a few target distributions on $\{0,1\}^n$ uniformly at random (uniform Dirichlet).  
For each target distribution, we initialize the model at the uniform distribution, $\theta_0=0$. 
The gradient descent parameter iteration is 
$$
\theta_{t+1} = \theta_t - \gamma_t G(\theta_t)^{-1} \nabla \operatorname{KL}(q\|p_{\theta_t}), 
$$
where $G$ is the corresponding metric (Euclidean, Fisher, or Wasserstein), $\nabla$ is the standard gradient operator with respect to the model parameter $\theta$, and $\gamma_t\in\mathbb{R}_+$ is the learning rate (step size).   
The choice of the learning rate $\gamma_t$ is important and the optimal value may vary for different methods and problems. 
We implemented an adaptive method to handle this as follows. 
We set an initial learning rate $\gamma_0=0.001$, and at each iteration $t$, if the divergence does not decrease, we scale down the learning rate by a factor of $3/4$. We also tried a few other methods, including backtracking line search and Adam~\cite{journals/corr/KingmaB14}, which is a method based on adaptive estimates of lower-order moments of the gradient. The stopping criterion was that the infinity norm of the expectation parameter matched the data expectation parameter to within 1 percent.

The results are shown in Figures~\ref{fig:RobustAdam}. 
The convergence to the final value can be monitored in terms of the normalized area under the optimization curve, $\sum_{t=1}^T (D_t-D_T)/(D_0-D_T)$, where $D_t$ is the divergence value at iteration $t$, and $T$ is the final time. 
All methods achieved similar values of the divergence, except for the Euclidean gradient with non-orthogonal parametrization, which did not always reach the minimum. 
For the Fisher and Wasserstein gradients, the learning paths were virtually identical under the two different model parametrizations, as we already expected from the fact that these are covariant gradients. 
On the other hand, for the Euclidean gradient, the paths (and the number of iterations) were heavily dependent on the model parametrization, with the orthogonal basis usually being a much better choice than the non-orthogonal basis. 
In terms of the number of iterations until the convergence criterion was satisfied, 
the comparison is difficult because different methods work best with different step sizes. 
With the simple adaptive method and a suitable initial step size, the Wasserstein gradient was faster than the Euclidean and Fisher gradients. On the other hand, using Adam to adapt the step size, orthogonal Euclidean, Fisher, and Wasserstein were comparable. 

\begin{figure}[h]
	\centering
	\includegraphics[clip=true, trim=1.6cm 9cm 2cm 10cm,width=.7\textwidth]{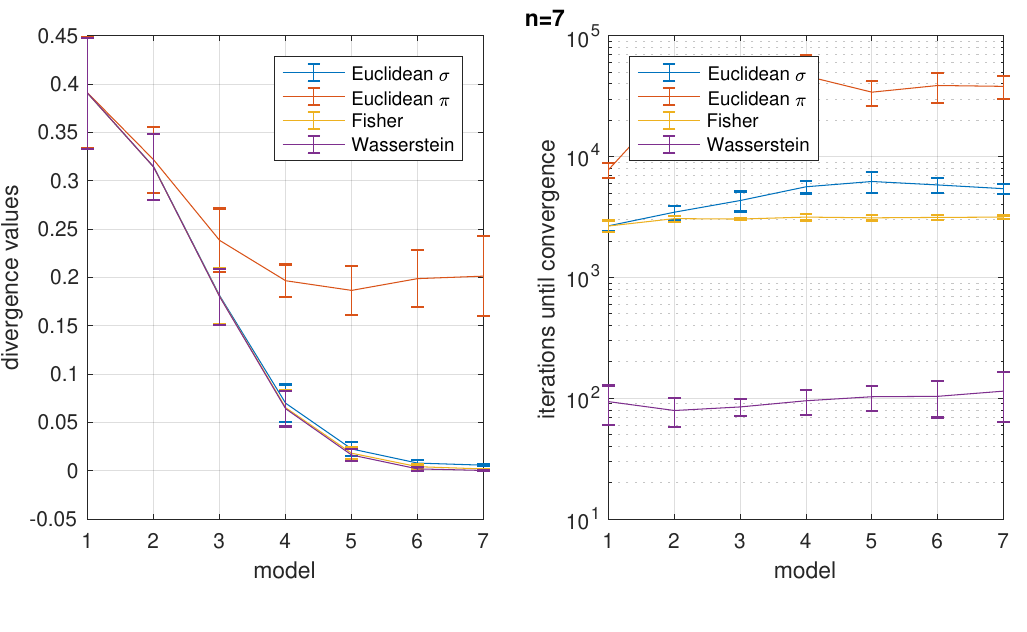}
	\includegraphics[clip=true, trim=5.5cm 7cm 6cm 7.4cm,width=.29\textwidth]{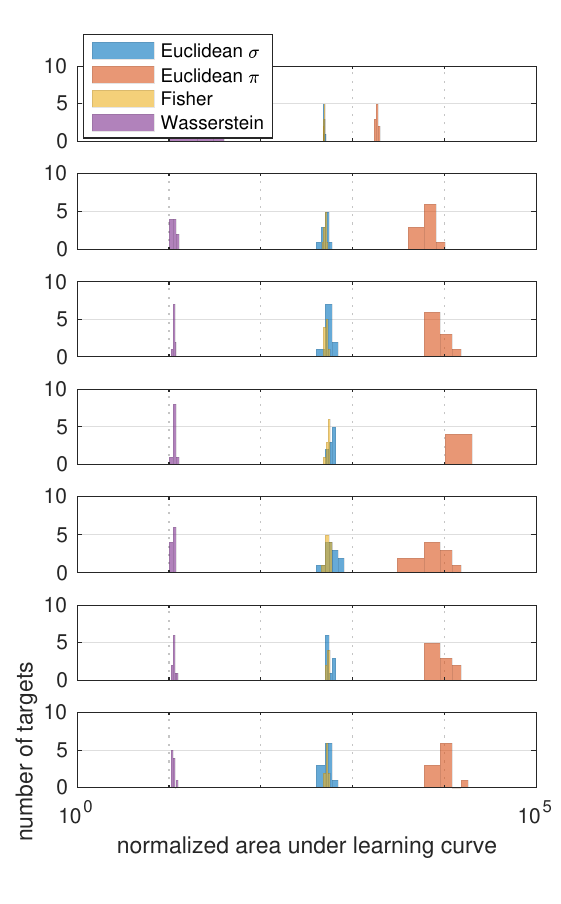}\\
	\includegraphics[clip=true, trim=1.6cm 9cm 2cm 10cm,width=.7\textwidth]{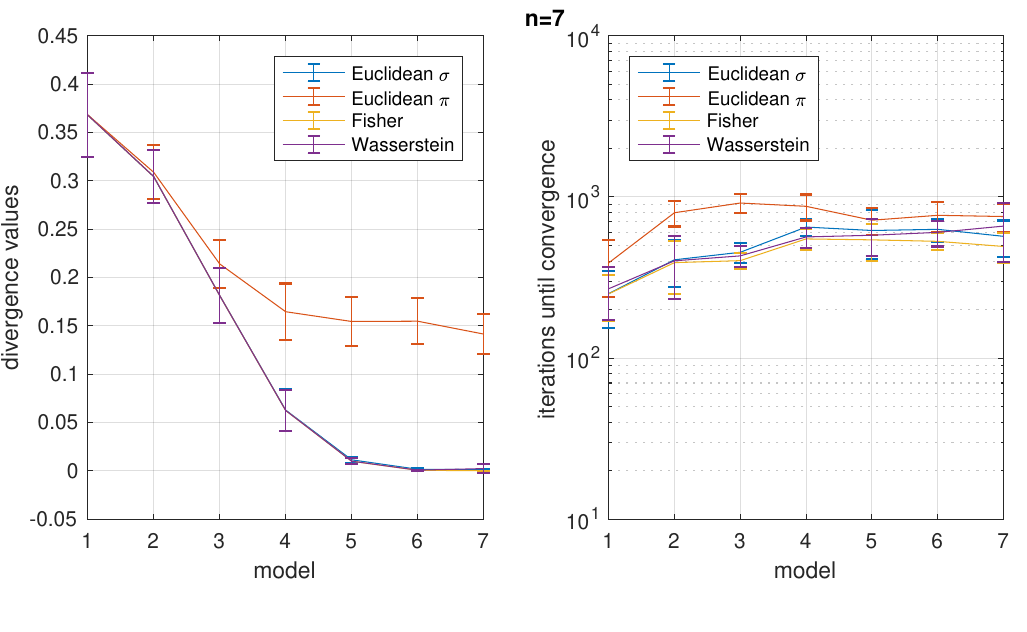}
	\includegraphics[clip=true, trim=5.5cm 7cm 6cm 7.4cm,width=.29\textwidth]{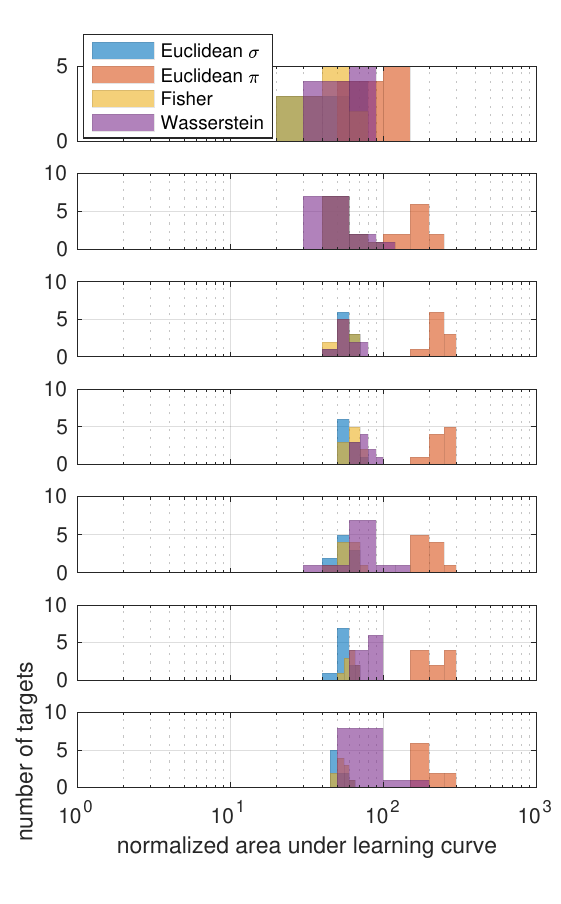}
	\caption{
		Divergence minimization for random target distributions on $\{0,1\}^n$, $n=7$, over $k$-interaction models with $k=1,\ldots,n$. 
		Shown is the average value of the divergence after optimization by Euclidean, Fisher, and Wasserstein gradient descent, 
		and the corresponding number of gradient iterations. 
Orthogonal and non-orthogonal parametrization are indicated by $\sigma$ and $\pi$. 
		The right hand side shows histograms of the normalized area under the optimization curves. 
		The top figures are using a simple adaptive method for selecting the step size, and the bottom figures are using Adam. 
	}
	\label{fig:RobustAdam}
\end{figure}
\end{example}


\section{Discussion}
We introduced the Wasserstein statistical manifolds, which are submanifolds of the probability simplex with the $L^2$-Wasserstein Riemannian metric tensor. With this, we defined an optimal transport natural gradient flow on parameter space. 

The Wasserstein distance has already been discussed with divergences in information geometry and also shown to be useful in machine learning, for instance in training restricted Boltzmann machines and generative adversarial networks. 
In this work, we used the Wasserstein distance to define a geometry on the parameter space of a statistical model. Following this geometry, we establish a corresponding natural gradient and displacement convexity on parameter space.

We presented an application of the Wasserstein natural gradient method to maximum likelihood estimation in hierarchical probability models. The experiments show that, in combination with a suitable step size, the Wasserstein gradient can be a competitive optimization method and even reduce the required number of parameter iterations compared both to Euclidean and Fisher gradient methods. It will be essential to conduct further experimental studies to better understand the effects of the learning rate, as well as the interplay of ground metric, model, and optimization problem. In our current implementation, the Wasserstein gradient involved heavier computational costs compared to the Euclidean and Fisher gradients. For applications, it will be important to explore efficient computation and approximation approaches. 

Regarding the theory, we suggest that many studies from information geometry will have a natural analog or extension in the Wasserstein statistical manifold. Some questions to consider include the following. Is it possible to characterize the Wasserstein metric on probability manifolds through an invariance requirement of Chentsov type? 
For instance, the work~\cite{e16063207} formulates extensions of Markov embeddings for polytopes and weighted point configurations. 
Is there a weighted graph structure for which the corresponding Wasserstein metric recovers the Fisher metric? 

The critical innovation coming from the Wasserstein gradient in comparison to the Fisher gradient is that it incorporates a ground metric in sample space. We suggest that this could have a positive effect not only concerning optimization, as discussed above, but also regarding generalization performance, in interplay with the optimization. The reason is that the ground metric on sample space provides means to introduce preferences in the hypothesis space. 
The specific form of such a regularization still needs to be developed and investigated. In this regard, a natural question is how to define natural ground metric notions. These could be fixed in advance or trained. 

We hope that this paper contributes to strengthening the emerging interactions between information geometry and optimal transport, in particular, to machine learning problems, and to develop better natural gradient methods. 

\noindent\textbf{Acknowledgement}
The authors would like to thank Prof.~Luigi Malag\`o for his inspiring talk at UCLA in December 2017. 
This project has received funding from the European Research Council (ERC) under the European Union's Horizon 2020 research and innovation programme (grant agreement n\textsuperscript{o} 757983). 

\bibliographystyle{abbrv}

\appendix
\section*{Appendix}

In this appendix we review the equivalence of static and dynamical formulations of the $L^2$-Wasserstein metric formally. For more details see \cite{vil2008}. 

Consider the duality of linear programming. 
\begin{equation}\label{aa}
\begin{split}
\frac{1}{2}W(\rho^0,\rho^1)^2=&\inf_{\pi\geq 0}\Big\{\int_{\Omega}\int_{\Omega}\frac{1}{2}d_{\Omega}(x,y)^2\pi(x,y)dxdy\colon \int_\Omega\pi dy=\rho^0(x),~\int_\Omega\pi dx=\rho^1(y)\Big\}\\
=&\sup_{\Phi^1, \Phi^0}\Big\{\int_{\Omega}\Phi^1(y)\rho^1(y)dy-\int_\Omega\Phi^0(x)\rho^0(x)dx\colon \Phi^1(y)-\Phi^1(x)\leq \frac{1}{2}d_\Omega(x,y)^2\Big\}. 
\end{split}
\end{equation}
 By standard considerations, the supremum in the last formula is attained when  
 \begin{equation}\label{HL}
 \Phi^1(y)=\sup_{x\in \Omega}~\Phi^0(x)+\frac{1}{2}d_\Omega(x,y)^2.
 \end{equation}
This means that $\Phi^1$, $\Phi^0$ are related to the viscosity solution of the Hamilton-Jacobi equation on $\Omega$:
\begin{equation}\label{HJB}
\frac{\partial\Phi(t,x)}{\partial t}+\frac{1}{2}g_x^\Omega(\nabla\Phi(t,x), \nabla\Phi(t,x))=0,
\end{equation}
with $\Phi^0(x)=\Phi(0,x)$, $\Phi^1(x)=\Phi(1,x)$. Hence \eqref{aa} becomes 
\begin{equation*}
\frac{1}{2}W(\rho^0,\rho^1)^2=\sup_{\Phi}\Big\{\int_{\Omega}\Phi^1(x)\rho^1(x)-\Phi^0(x)\rho^0(x)dx\colon \frac{\partial\Phi(t,x)}{\partial t}+\frac{1}{2}g_x^\Omega(\nabla\Phi(t,x), \nabla\Phi(t,x))=0 \Big\}.
\end{equation*}
By the duality of above formulas, we can obtain variational problem \eqref{def_metric}.
 In other words, consider the dual variable of $\Phi_t=\Phi(t,x)$ by the density path $\rho_t=\rho(t,x)$, then 
\begin{equation*}
\begin{split}
&\frac{1}{2}W(\rho^0,\rho^1)^2\\
=&\sup_{\Phi_t}\inf_{\rho_t}~\int_{\Omega}\Phi^1\rho^1-\Phi^0\rho^0dx-\int_0^1\int_{\Omega}\rho_t\big[ \partial_t\Phi_t+\frac{1}{2}g_x^\Omega(\nabla\Phi_t, \nabla\Phi_t)dx\big] dt\\
=&\sup_{\Phi_t}\inf_{\rho_t}~\int_{\Omega}\Phi^1\rho^1-\Phi^0\rho^0dx-\int_0^1\int_{\Omega}\rho_t \partial_t\Phi_tdxdt- \int_0^1\int_{\Omega}\frac{1}{2}g_x^\Omega(\nabla\Phi_t, \nabla\Phi_t)\rho_tdx dt\\
=&\sup_{\Phi_t}\inf_{\rho_t}~\int_0^1\int_{\Omega}\partial_t\rho_t \Phi_t-g_x^\Omega(\nabla\Phi_t, \nabla\Phi_t)\rho_tdx dt+\int_0^1\int_{\Omega}\frac{1}{2}g_x^\Omega(\nabla\Phi_t, \nabla\Phi_t)\rho_tdx dt \\
=&\inf_{\rho_t}\sup_{\Phi_t}~\int_0^1\int_{\Omega}\Phi_t(\partial_t\rho_t+\textrm{div}(\rho\nabla\Phi_t)) dt+\int_0^1\int_{\Omega}\frac{1}{2}g_x^\Omega(\nabla\Phi_t, \nabla\Phi_t)\rho_tdx dt \\
=&\inf_{\rho_t}~\Big\{\int_0^1\int_{\Omega}\frac{1}{2}g_x^\Omega(\nabla\Phi_t, \nabla\Phi_t)\rho_tdx dt\colon \partial_t\rho_t+\textrm{div}(\rho\nabla\Phi_t)=0,~\rho_0=\rho^0, ~\rho_1=\rho^1\Big\}.
\end{split}
\end{equation*}
The third equality is derived by integration by parts w.r.t.~$t$ and the fourth equality is by switching infimum and supremum relations and integration by parts w.r.t.~$x$. 

In the above derivations, the relation of Hopf-Lax formula~\eqref{HL} and Hamilton-Jacobi equation~\eqref{HJB} plays a key role for the equivalence of static and dynamic formulations of the Wasserstein metric. 
This is also a consequence of the fact that the sample space $\Omega$ is a length space, i.e.,  
\begin{equation*}
d_\Omega(x,y)^2=\inf_{\gamma(t)}\Big\{\int_0^1g_{\gamma(t)}^\Omega(\dot \gamma, \dot\gamma)dt\colon \gamma(0)=x,~\gamma(1)=y\Big\}.
\end{equation*}
However, in a discrete sample space $I$, there is no path $\gamma(t)\in I$ connecting two discrete points. 
Thus the relation between \eqref{HL} and \eqref{HJB} does not hold on $I$. This indicates that  in discrete sample spaces, the Wasserstein metric in Definition~\eqref{def_metric} can be different from the one defined by linear programming~\eqref{linear}. See many related discussions in~\cite{chow2012, Maas}.

\section*{Notations}
We use the following notations.

\begin{tabular}{|l|c|c|}
\hline 
Continuous/Discrete sample space &$\Omega$ & $I$ \\
\hline 
Inner product  &$g^\Omega$ & $g^I$\\
Gradient& $\nabla$ & $\nabla_G$\\
 divergence & $\textrm{div}$ & $\textrm{div}_G$\\
Hessian in $\Omega$ &\textrm{Hess} & \\
Potential function set&$\mathcal{F}(\Omega)$ & $\mathcal{F}(I)$\\
Weighted Laplacian operator & $-\nabla\cdot(\rho\nabla)$ & $L(p)$\\
\hline
\end{tabular}

\begin{tabular}{|l|c|c|}
\hline 
Continuous/Discrete probability space &
$\mathcal{P}_+(\Omega)$ & 
$\mathcal{P}_+(I)$\\
\hline 
Probability distribution & $\rho$ & $p$\\
Tangent space & $T_\rho\mathcal{P}_+(\Omega)$ &  $T_p\mathcal{P}_+(I)$\\
Wasserstein metric tensor& $g^W$ & $g^W$\\
Dual coordinates &$\Phi(x)$& $(\Phi_i)_{i=1}^n$\\
Primal coordinates & $\sigma(x)$ & $(\sigma_i)_{i=1}^n$ \\
First differential operator & $\delta_\rho$& $\nabla_p$\\
 Second differential operator &$\delta^2_{\rho\rho}$ & 
 \\
Gradient operator && $\nabla_W$\\
Hessian operator &&$\textrm{Hess}_W$ \\
Levi-Civita connection & & $\nabla^W_{\cdot}\cdot$\\
\hline
\end{tabular}

\begin{tabular}{|l|c|c|}	
\hline
Parameter space/Probability model &$\Theta$ & $p(\Theta)$\\
\hline 
Inner product  &$g_\theta$ & $g_{p(\theta)}$\\
Tangent space & $T_\theta\Theta$ &  $T_{p(\theta)}p(\Theta)$\\
$L^2$-Wasserstein matrix  & $G(\theta)$ & \\
$L^2$-Wasserstein distance  &$\textrm{Dist}$& $\textrm{Dist}$\\
Second fundamental form  & & $B(\cdot, \cdot)$\\
Projection operator  & & $H$\\
Levi-Civita connection & &$(\nabla^W_\cdot\cdot)^{||}$\\
Jacobi operator  & $J_\theta $ &\\
First differential operator & $\nabla_\theta$ & \\
Gradient operator &$\nabla_g$ & \\
Hessian operator & $\textrm{Hess}_g$ & \\
\hline
\end{tabular}

\end{document}